\documentclass[review,hidelinks,onefignum,onetabnum]{siamart220329}
\usepackage{inputenc}
\usepackage{bbm}
\usepackage{booktabs}
\usepackage{amsfonts}
\usepackage{pythonhighlight}
\usepackage{longtable}
\usepackage{amsmath}
\usepackage{adjustbox}
\usepackage{siunitx}
\usepackage{tabularx}
\usepackage{float}
\usepackage{latexsym}
\usepackage{graphicx}
\usepackage{subcaption}
\usepackage{ragged2e}
\usepackage{siunitx}
\usepackage{svg}
\usepackage{pythonhighlight}
\usepackage{array}
\usepackage{amsmath,amssymb,mathabx} 
\usepackage{calc}
\theoremstyle{remark}

\usepackage{algorithm}
\usepackage{algpseudocode}
\algnewcommand{\LineComment}[1]{\State \(\triangleright\) #1}

\nolinenumbers



\newcommand{\mc}[1]{\mathcal{#1}}
 
\renewcommand{\vec}[1]{\boldsymbol{#1}}

\newcommand{\mat}[1]{\boldsymbol{#1}}
\newcommand{\B}[1]{\boldsymbol{#1}}
\newcommand{\R}{\mathbb{R}}
\newcommand{\ten}[1]{\boldsymbol{\mathcal{#1}}}

\newcommand{\tenh}[1]{\widehat{\boldsymbol{\mc{#1}}}}
\newcommand{\mathat}[1]{\widehat{\boldsymbol{#1}}}

\newcommand{\bmat}[1]{\begin{bmatrix}#1\end{bmatrix}}

\renewcommand{\t}{^\top}

\makeatletter
\newcommand{\hathat}[1]{%
\begingroup%
  \let\macc@kerna\z@%
  \let\macc@kernb\z@%
  \let\macc@nucleus\@empty%
  \hat{\raisebox{.37ex}{\vphantom{\ensuremath{#1}}}\smash{\hat{#1}}}%
\endgroup%
}
\makeatother

\title{Parametric kernel low-rank approximations using tensor train decomposition\thanks{Submitted to the editors DATE.
\funding{This work was funded by the National Science Foundation through the awards DMS-1845406, DMS-1821149, and 
DMS-2026830.}}}

\author{Abraham Khan\thanks{Department of Mathematics, North Carolina State University (\email{awkhan3@ncsu.edu}).}
\and Arvind K.\ Saibaba\thanks{Department of Mathematics, North Carolina State University (\email{asaibab@ncsu.edu}).}}

\begin{document}
\maketitle
\begin{abstract}
   Computing low-rank approximations of kernel matrices is an important problem with many applications in scientific computing and data science. We propose methods to efficiently approximate and store low-rank approximations to kernel matrices that depend on certain hyperparameters. The main idea behind our method is to use multivariate Chebyshev function approximation along with the tensor train decomposition of the coefficient tensor. The computations are in two stages: an offline stage, which dominates the computational cost and is parameter-independent, and an online stage, which is inexpensive and instantiated for specific hyperparameters. A variation of this method addresses the case that the kernel matrix is symmetric and positive semi-definite. The resulting algorithms have linear complexity in terms of the sizes of the kernel matrices. We investigate the efficiency and accuracy of our method on parametric kernel matrices induced by various kernels, such as the Mat\'ern kernel, through various numerical experiments on problem setups up to three spatial dimensions and two parameter dimensions. Our methods have speedups up to $200\times$ in the online time compared to other methods with similar complexity and comparable accuracy.
\end{abstract}
\section{Introduction}
A kernel is a function of type $\kappa: \R^d \times \R^d \rightarrow \R$. Given two sets of points $\mathcal{X} = \{\vec{x}_1, \dots, \vec{x}_{N_s}\} \subseteq \mathbb{R}^{d}$ and $\mathcal{Y} = \{\vec{y}_1, \dots, \vec{y}_{N_t} \} \subseteq \mathbb{R}^{d}$, which we refer to as the source and target points respectively, we define a kernel matrix $\mat{K}(\mathcal{X},\mathcal{Y}) \in \mathbb{R}^{N_s \times N_t}$ with entries
$$[\mat{K}(\mathcal{X},\mathcal{Y})]_{i,j} = \kappa(\vec{x}_{i},\vec{y}_j), \qquad 1 \leq i \leq N_s, 1 \leq j \leq N_t.$$
Kernel matrices arise in many instances in applied mathematics; for example, Gaussian Processes, N-body simulations, integral equations, etc. However, kernel matrices are dense (depending on the support of $\kappa$) and, therefore, a central challenge is storing and computing kernel matrices when the number of source and target points $N_s$ and $N_t$ are large.  The na\"ive approach has a complexity $\mc{O}(N_sN_t)$ floating point operations (flops), which is not practically applicable. Hence, it is desirable to develop low-rank approximation methods that have linear complexity in the number of source and target points.

The singular value decay of a kernel matrix $\mat{K}(\mathcal{X},\mathcal{Y})$ is dependent on both the spatial configuration and the interactions between source and target points, as well as the smoothness of the kernel function $\kappa$. To quantify the level of interaction between the source and target points, we introduce the source box $\mathcal{B}_{s} =  \bigtimes_{i=1}^d [\alpha^{s}_i, \beta^{s}_i] $ and the target box $\mathcal{B}_{t} =  \bigtimes_{i=1}^d [\alpha^{t}_i, \beta^{t}_i]$, such that $\mathcal{X} \subseteq \mathcal{B}_{s}$ and $\mathcal{Y} \subseteq \mathcal{B}_{t}$, the boxes $\mathcal{B}_{s}$ and $\mathcal{B}_{t}$ can have varying degrees of intersection.  Identifying and computing low-rank approximations of kernel matrices when the boxes $\mc{B}_{s}$ and $\mc{B}_{t}$ are separated, is an important step in many kernel evaluation algorithms such as the fast multipole method and hierarchical matrices.

This paper tackles a slightly more general problem setting. In many applications, the behavior of a kernel function $\kappa$ depends on several parameters (sometimes denoted $\vec\theta \in \Theta$) that influence its behavior and by extension the behavior of the corresponding kernel matrix. In these applications, often the kernel low-rank approximation must be computed efficiently for several instances of the parameters $\vec\theta$. More concretely, given a (parametric) kernel function $\kappa:\R^d\times \R^d  \times \R^{d_\theta} \rightarrow \R$ we define the corresponding parametric kernel matrix as
\begin{equation}\label{eqn:matparamkernel}[\mat{K}(\mathcal{X},\mathcal{Y}; {\vec{\theta}})]_{i, j} = \kappa(\vec{x}_{i},\vec{y}_j; {\vec{\theta}}), \qquad \vec{x}_i \in \mathcal{X}, \quad \vec{y}_j \in \mathcal{Y}, \quad \vec{\theta} \in \Theta,\end{equation}
for $1 \le i \le N_s$ and $1 \le j \le N_t$. Similar to the source and target boxes, the parameter space $\Theta$ is enclosed in a parameter box $\mathcal{B}_{\vec\theta} =  \bigtimes_{i=1}^{d_{\theta}} [\alpha^{p}_i, \beta^{p}_i]$. Our goal is to construct a parametric low-rank approximation of the form
 \begin{equation}\label{eqn:paramlowrank}
     \mat{K}(\mathcal{X},\mathcal{Y};{\vec{\theta}}) \approx \mat{SH}(\B\theta)\mat{T}^\top,
 \end{equation}
 where the matrices $\mat{S} \in \R^{N_s \times s}$ and $\mat{T} \in \R^{N_t \times t}$ represent the factor matrices corresponding to the source and target matrices respectively. Importantly, these factor matrices are independent of the parameter $\vec\theta$, and the parametric dependence only enters through the core matrix $\mat{H}(\vec\theta) \in \R^{s\times t}$. We assume that the ranks satisfy $\max\{s,t\} \ll \min\{N_s,N_t\}$. The advantage of this representation is clear if much of the computation can be offloaded to an offline stage, and the cost of evaluating $\mat{H}(\vec\theta)$ for a new parameter value in the online stage is much cheaper than evaluating the kernel matrix from scratch. A special but important case is when the kernel is symmetric (and perhaps positive definite) and the source and target points coincide. We refer to this as the \textit{global} case, and the resulting low-rank approximation the \textit{global low-rank approximation}.

There are several situations where such a (parametric) low-rank approximation is beneficial. In Gaussian processes, one has to optimize over the hyperparameters that define the kernel. This requires expensive evaluations of the kernel matrix, and perhaps additionally its derivatives with respect to the hyperparameters, at each step of the optimization routine. This similar situation also arises in hierarchical Bayesian inverse problems, where Gaussian processes are used to define the prior distributions, and there are additional hyperparameters to be determined~\cite{hall2023efficient}. The parametric low-rank representation~\eqref{eqn:paramlowrank} can be used instead of the kernel matrix in an outer loop computation, or can be used as a preconditioner to accelerate an iterative method (e.g., conjugate gradients) for the solution of the linear system involving the kernel matrix.

\paragraph{Our approach}
At the heart of our approach is a multivariate Chebyshev interpolation of the function $f_\kappa: \mathbb{R}^{D} \rightarrow \mathbb{R}$ where $D = 2d+d_\theta$ and is defined as 
\begin{equation}\label{eqn:fkappa} f_\kappa(\vec{\xi})  = \kappa(\vec{x},\vec{y}; {\vec{\theta}}), \qquad  \vec{\xi} = (\vec{x}, \vec{\theta}, \vec{y}).\end{equation}
Note that the specific ordering of the variables is essential for our algorithm. For example, in a typical problem setting involving three spatial dimensions $d=3$ and two parameter dimensions $d_\theta =2$, the resulting dimension $D=8$ makes Chebyshev interpolation challenging. Consequently, a coefficient expansion of $n$ points in each dimension gives a coefficient tensor of size $n^D$, which can be challenging to construct and store, for example, when $n = 27$. To address this challenge, we further compress the coefficient tensor in the tensor train (TT) format without explicitly forming it first. This roughly comprises the offline stage. In the online stage, a small amount of work is needed independent of the number of source and target points, and new kernel evaluations are not required.

 \paragraph{Contributions}
The specific contributions and features of our work are as follows:
\begin{enumerate}
    \item We propose a new algorithm, Parametric Tensor Train Kernel (PTTK), that computes the low-rank approximation of parametric kernel matrices by combining multivariate Chebyshev interpolation with the TT-decomposition, \\ when the source and target points are either weakly or strongly admissible.  Our algorithm involves a two-stage process, comprising an offline (pre-computation) stage and an online stage. A version of this algorithm also applies to non-parametric kernel low-rank approximations.
    \item We perform a detailed analysis of the cost involving computations and storage. The offline cost is linear in the number of source and target points (i.e., w.r.t. $N_s$ and $N_t$). However, during the online stage, when a specific instance of $\vec\theta \in \Theta$ is provided, the runtime becomes independent of $N_s$ and $N_t$. The algorithms are accompanied by an analysis of the error.
    \item We adapt our algorithm to handle a global low-rank approximation where the source and target points coincide (i.e., $\mc{X}=\mc{Y}$). If, additionally, the kernel matrix is symmetric and positive semi-definite, we adapt our approach to ensure these properties are retained by the low-rank approximation with a small additional cost in the online stage that does not affect the overall complexity. 
    \item We demonstrate the efficiency of our methods on various non-parametric and parametric kernels with applications including multivariate interpolation, partial differential and integral equations, and Gaussian processes. For weakly or strongly admissible boxes, the algorithms can achieve moderate to high accuracy $10^{-5}-10^{-9}$, whereas for the global low-rank case the algorithms achieve low to moderate accuracy $10^{-2}-10^{-5}$ depending on the smoothness of the kernels. We compare our implementation against other algorithms also with linear complexity, but our approach outperforms existing methods due to the low online time (up to $200\times$ speedup).
    \item  We propose a new method for initializing the index sets used in the tensor train greedy-cross algorithm that may be of independent interest beyond this paper. 
\end{enumerate}

We also note that the numerical experiments show good performance on problems up to $D= 8$ dimensions (three spatial and two parameter dimensions). Future work involves investigating the performance of the methods in higher dimensions. The code to reproduce the tables is located at \url{https://github.com/awkhan3/ParametricTensorTrainKernel}.

.
\paragraph{Related work} We discuss some related work for low-rank kernel approximations, but since this is a vast area, we limit ourselves to a few references to give some context. 
We first consider the case of obtaining a low-rank approximation of a kernel matrix, that is not necessarily parametric. An optimal low-rank approximation, in an orthogonally invariant norm, can be computed using the truncated SVD. However, the cost of this method is prohibitively expensive for large values of $N_s, N_t$ since it involves forming the kernel matrix first and then compressing it. Approaches such as interpolatory decompositions and randomized SVD~\cite{halko} can alleviate this computational cost, but are still very expensive. It is desirable to develop algorithms that have linear dependence on $N_s$ and $N_t$. In this category, there are certain versions of  Adaptive Cross Approximation (ACA) \cite{bebendorf2000approximation, bebendorf2003adaptive}, black-box fast multipole method \cite{fong2009black}, and Nystr\"om approximation~\cite{Cai2023}. Certain analytic techniques such as multipole expansions~\cite{greengard1987fast}, Taylor expansions, equivalent densities~\cite{ying2004kernel}, and proxy point method~\cite{ye2020analytical}. For the global case, and for kernels that define positive semi-definite matrices there are various techniques such as Nystr\"om's method~\cite{NIPS2000_19de10ad}, random Fourier features~\cite{randomfeatures-1,randomfeatures-2}, and randomly pivoted Cholesky~\cite{RPCholesky}. There is also the hybrid cross-approximation method \cite{borm2005hybrid}, which uses polynomial interpolation to obtain a first approximation and then uses a cross-approximation technique to obtain the final low-rank approximation.

To our knowledge, the parametric case has been addressed only in a handful of papers~\cite{Saibaba2022, paz, DK2020} specifically for kernel matrices. In~\cite{https://doi.org/10.1002/nla.2576, park2024lowrankapproximationparameterdependentmatrices}, the authors treat the more general case of parameter-dependent low-rank approximations. We now discuss the relationship between our work and the above references for kernel matrices. The closest work to the present paper is \cite{Saibaba2022}, which also uses a multivariate Chebyshev expansion followed by a compression of the coefficient tensor using Tucker decomposition. The main differences with the above approach are that (1) we use the TT-decompsition, which can potentially scale to higher dimensions than the Tucker decomposition (2) we use the TT-cross approximation, which avoids forming the tensor explicitly. As in~\cite{Saibaba2022}, our approach also has linear cost in $N_s$ and $N_t$ but has a more favorable computational complexity with respect to the number of Chebyshev points and the dimensions. Our work is also related to~\cite{strossner2022approximation}, which does not deal specifically with kernel approximations but develops algorithms for high-dimensional function approximation.

In \cite{DK2020}, obtaining a low-rank approximation of a parametric kernel matrix that is symmetric and positive semi-definite is done by applying ACA to an affine linear expansion of $\mat{K}(\mathcal{X}, \mathcal{X}; \vec\theta)$ w.r.t. $\vec\theta$, and this affine linear expansion is obtained through the empirical interpolation method. The goal is to use precomputation (offline stage) to obtain a low-rank factorization of the parametric kernel matrix where each factor depends on $\vec\theta$; hence, the online time of the proposed method is dependent on $N_s$ and $N_t$. Our method is not constrained to the global symmetric case and has a more efficient online stage. 

In \cite{paz}, a shift-invariant kernel is expressed in terms of a feature map, as a consequence of Bochner's Theorem, and  a Gauss-Legendre quadrature-based approach is used that gives a low-rank approximation in the form of~\eqref{eqn:paramlowrank}.
As with our proposed algorithm, the online time does not depend on $N_s$ nor $N_t$. However, as in~\cite{Saibaba2022}, the method has an exponential dependence on the number of features and is expensive for even modest dimensions. Furthermore, it also appears to be constrained to the global symmetric case  and requires explicit knowledge of the feature map; our method does not suffer from these issues since it only relies on kernel evaluations.

\section{Background}
\label{ssec:matrix_operations}
\subsection{Matrix Operations}
In this section, we define some matrix operations that will come in handy when dealing with tensors.  Consider two arbitrary matrices $\mat{G} \in \R^{s \times m}$ and $\mat{H} \in \R^{p \times k}$. 
We define the Kronecker product $\mat{G} \otimes \mat{H}$
to be a $\R^{sp \times mk}$ block matrix:

\[
\mat{G} \otimes \mat{H} = 
\begin{bmatrix}
    g_{1,1}\mat{H} & g_{1,2}\mat{H} & \cdots & g_{1,m-1}\mat{H} & g_{1,m}\mat{H} \\
    g_{2,1}\mat{H} & g_{2,2}\mat{H} & \cdots & g_{2,m-1}\mat{H} & g_{2,m}\mat{H} \\
    \vdots & \vdots & \ddots  & \vdots & \vdots \\
    g_{s,1}\mat{H} & g_{s,2}\mat{H} & \cdots & g_{s,m-1}\mat{H} & g_{s,m}\mat{H} \\
\end{bmatrix}.
\]

We now define matrices $\mat{A} \in \R^{s \times q}, \mat{B} \in \R^{p \times q} $. We denote the symbol $\rtimes$ as the Khatri-Rao product, which we define as

\[
\begin{aligned}
\mat{A} \rtimes \mat{B} &= 
\left[
\begin{array} { c | c | c | c}
\vec{a}_1 \otimes \vec{b}_1 & \vec{a}_2 \otimes \vec{b}_2 & \dots & \vec{a}_q \otimes \vec{b}_q
\end{array}
\right] \in \R^{sp \times q}
\end{aligned} 
\]

where $\vec{a}_{i}$ and $\vec{b}_{i}$ are the column vectors of matrices $\mat{A}$ and $\mat{B}$, for $1 \le i \le q$, respectively. We denote the symbol $\ltimes$ as the face-splitting product, also referred to as the  transposed Khatri-Rao product, which we define in terms of the Khatri-Rao product:
$$\mat{A}^{\top} \ltimes \mat{B}^{\top} = (\mat{A} \rtimes \mat{B})^{\top}.$$
Lastly, both the face-splitting product and Khatri-Rao product satisfy some mixed-product properties with the Kronecker product 
\begin{equation}\label{eqn:mixedprod}
\begin{aligned}
(\mathbf{G}^{\top} \otimes \mathbf{H}^{\top})(\mathbf{A} \rtimes \mathbf{B}) = (\mathbf{G}^{\top}\mathbf{A}) \rtimes (\mathbf{H}^{\top}\mathbf{B}), \\
(\mathbf{A}^{\top} \ltimes \mathbf{B}^{\top})(\mathbf{G} \otimes \mathbf{H}) = (\mathbf{A}^{\top}\mathbf{G}) \ltimes (\mathbf{B}^{\top}\mathbf{H}).
\end{aligned}
\end{equation}

\subsection{Tensor Basics}
We define a tensor $\ten{X} \in \mathbb{R}^{n_1 \times n_2 \times \dots \times n_N}$ for $N \in \mathbb{N}$ as a multi-dimensional array. Indexing into a tensor $\ten{X}$ is represented by $x_{i_1, i_2, \dots, i_N}$ or alternatively $[\ten{X}]_{i_1,\dots,i_N}$, where each index $i_j$ ranges from $1$ to $n_j$ for $1 \leq j \leq N$. We define the Chebyshev norm of $\ten{X}$ as
$$\|\ten{X}\|_C = \max_{i_1,\dots,i_N}|x_{i_1, i_2, \dots, i_N}|.$$
Similarly, the Frobenius norm of a tensor $\ten{X}$ is defined as the square root of the sum of squares of all the entries and is given by the formula
\[
\| \ten{X} \|_F = \sqrt{ \sum_{i_1 = 1}^{n_1} \dots \sum_{i_N = 1}^{n_N} |x_{i_1,i_2,\dots,i_N}|^2}.
\]
We now describe the mode-k product that will be used in this paper. For a matrix, $\mat{A} \in \mathbb{R}^{m \times n_k}$, one can define the mode-$k$ product of $\ten{X}$ w.r.t.\  $\mat{A}$ as
$\ten{Y} = \ten{X} \times_{k} \mat{A}$ where the tensor $\ten{Y}$ has entries
$$y_{i_1, \dots, i_{k-1}, j, i_{k+1}, \dots, i_N} = \sum^{n_k}_{i_k=1}x_{i_1,\dots, i_d}a_{j, i_k} , \qquad 1 \le j \le m.$$ Let $(i_1, i_2, \dots i_j) $ be an arbitrary multi-index for $1 \le j \le N$ where $1 \le i_j \le n_j$, and we denote  $ \overline{i_1, i_2, \dots, i_j}$ to be the little endian flattening of the multi-index into a single index defined by the formula
$$\overline{i_1, i_2, \dots, i_j}= i_1 + (i_2 - 1) n_1 + (i_3 - 1)  n_1  n_2 + \ldots + (i_j - 1)  n_1 n_2  \ldots  n_{j-1}.$$
We can now define the tensor unfolding matrix  $\mat{X}^{\{j\}}$ of size $\prod^{j}_{i=1}n_i \times \prod_{i=j+1}^{N} n_i$ whose entries are given by
$$ [\mat{X}^{\{j\}}]_{\overline{i_1, \dots, i_j}, \overline{i_{j+1}, \dots, i_N}} = x_{i_1,i_2,\dots,i_{N-1},i_N}, \quad 1 \le i_j \leq n_j, 1 \le j \le N.$$
 We can also  define the tensor unfolding matrix through MATLAB's reshape command
$$\mat{X}^{\{j\}} = \textrm{reshape}(\ten{X}, \prod_{i=1}^j n_i,  \prod_{i=j+1}^N n_i), \qquad 1 \leq j \leq  N.$$ Lastly, considering an arbitrary matrix $\mat{A} \in \R^{m \times k}$,  we now define the twist tensors $\ten{T}^{(1)}(\mat{A}) \in \R^{1 \times m \times k}$, $\ten{T}^{(2)}(\mat{A}) \in \R^{m \times 1 \times k}$ and $\ten{T}^{(3)}(\mat{A}) \in \R^{m \times k \times 1}$ of $\mat{A}$ as the tensors with entries 
\begin{equation}\label{eqn:twist}[\ten{T}^{(1)}(\mat{A})]_{1, i, j} = a_{i, j}, \quad [\ten{T}^{(2)}(\mat{A})]_{i, 1, j} = a_{i, j}, \textrm{ and } \quad [\ten{T}^{(3)}(\mat{A})]_{i, j, 1} = a_{i, j} ,\end{equation}
for $1 \le i \le m$ and $1 \le j \le k$.
\subsection{Tensor Train format} A tensor $\ten{X}$ is said to be in the Tensor Train (TT) format if $\ten{X}$ can be represented by a chain of third order tensors $\ten{G}_1,\dots,\ten{G}_N$ where $\ten{G}_j \in \mathbb{R}^{r_{j-1} \times n_j \times r_{j}}$ for $1 \leq j \leq N$ (with the convention  $r_0 = r_{N} = 1$) are referred to as the TT-cores and $r_0, \dots, r_N$ as the TT-ranks.
The entries of tensor $\ten{X}$ are  given by the following formula
$$x_{i_1,\dots,i_N} = \sum^{r_1}_{s_1 = 1} \dots \sum^{r_{N-1}}_{s_{N-1}=1} [\ten{G}_1]_{1, i_1, s_1} [\ten{G}_2]_{s_1, i_2, s_2} \cdots [\ten{G}_N]_{s_{N-1}, i_N, 1},$$
and  we also designate $r$ to be $r = \underset{i}{\max}~r_i$.
We denote the  TT-decomposition of $\ten{X}$ as
$\ten{X} = [\ten{G}_1, \ten{G}_2, \dots,\ten{G}_N ].$ Note, the TT-ranks are related to the ranks of the unfoldings of $\ten{X}$; as in,  $r_j = \textrm{rank}(\mat{X}^{\{j\}}).$ It is typically the case that $\ten{X}$ does not have an exact representation in the TT-format, but it can be approximated by the TT-format (we will denote such an approximation as $\tenh{X}$) via the TT-SVD algorithm which is given in~\cite[Algorithm 1]{oseledets2011tensor}.
~Often the TT-ranks of $\tenh{X}$ are inflated if such a TT approximation is obtained through a cross approximation technique, Algorithm~\ref{alg:greedy}. Hence, there is a need for a fast procedure that reduces the inflated TT-ranks of $\tenh{X}$ while maintaining the accuracy of the approximation to $\ten{X}$. We refer to such a procedure as TT rounding. The TT rounding algorithm was introduced in \cite{oseledets2011tensor}, and we describe it in Algorithm \ref{alg:tt_round}. Lastly, the computational cost of Algorithm \ref{alg:tt_round} is $\mc{O}(Nr^3\underset{i}{\max}~n_i)$ due to the computation of a thin QR and SVD required at each iteration of the first and second for loops, respectively. 
\begin{algorithm}[!ht]
\begin{algorithmic}[1]
  \caption{TT-Round}\label{alg:tt_round}
  \Require Tensor $\ten{X} = [\ten{G}_1, \ten{G}_2, \dots, \ten{G}_{N}]$ in TT format and a tolerance $\epsilon > 0$
  \Ensure TT   approximation $\tenh{X} = [\tenh{G}_{1},\tenh{G}_{2}, \dots, \tenh{G}_{N}]$ with reduced TT-ranks such that it satisfies
  ${\|\ten{X} - \tenh{X}\|_{F}} \le \epsilon{\|\ten{X}\|_{F}}.$
    \State $\delta \gets \frac{\epsilon}{\sqrt{N-1}}\|\ten{X}\|_{F}$
    \LineComment{Right to left orthogonalization}
    \For{$i = N ,\dots, 2$}
        \State $[\mat{Q}, \mat{R}] \gets \textrm{thin-QR}((\mat{G}_{i}^{\{1\}})^{\top})$
    \State $\ten{G}_{i} = \textrm{reshape}(\mat{Q}^{\top},[r_{i-1}, n_i, r_{i}]) $,  $\ten{G}_{i-1} = \ten{G}_{i-1} \times_{3} \mat{R}^{\top}$
    \EndFor
    \LineComment{SVD re-compression}
    \For{$i = 1,\dots, N-1$}
     \State $[\mat{U}, \mat{\Sigma}, \mat{V}] \gets \textrm{truncated-SVD}(\mat{G}_{i}^{\{2\}}, \delta)$ \Comment{$\mat{\Sigma} \in \R^{r_i' \times r_i'}$}
     \State $\ten{\hat{G}}_{i} \gets \textrm{reshape}(\mat{U}, [r_{i-1}', n_i, r_i'])$ \Comment{$r_0'=r_N'=1$}
     \State $\ten{G}_{i+1} = \ten{G}_{i+1} \times_{1} (\mat{\Sigma} \mat{V}^{\top} ) $
    \EndFor
    \State 
    \Return rounded TT-cores $[\tenh{G}_{1}, \tenh{G}_{2}, \dots, \tenh{G}_{N} ]$
    
  \end{algorithmic}
\end{algorithm}

A shortcoming of the TT-SVD algorithm is that one has to explicitly construct the tensor $\ten{X}$. This shortcoming can be alleviated by the TT-cross approximation method, which obtains a TT approximation to a tensor by only sampling a few of its entries. The method has a computational cost of $\mc{O}(Nnr^3)$ flops and given a tolerance $\epsilon_{\text{tol}}$ provides a TT approximation $\tenh{X}$ to $\ten{X}$ that ideally satisfies 
$$ \| \ten{X} - \tenh{X} \|_{C} \le \epsilon \|\ten{X}\|_{C}.$$
We say ideally because the stopping criterion for the TT-cross approximation method involves certain heuristics. We discuss the method in depth in Appendix \ref{sec:tt_cross_approx}. 
Lastly, we present a formula that describes the tensor unfolding matrix $\ten{X}^{\{j\}}$ for $1 \le j \le N$ in terms of its TT-cores, if it admits such a decomposition. Assuming $\ten{X}$ admits an exact TT-representation, the unfolding matrix of the tensor, $\mat{X}^{\{j\}}$ for $1 \le j \le N$ can be expressed in terms of the TT-cores of $\ten{T}$ with the formula \cite[Equation (3.8)]{shi}
\begin{equation} \label{eq:flattening_core_j}
\mat{X}^{\{j\}} = \prod_{i = 1}^{j-1} \left(\mat{I}_{\prod_{k=i+1}^j n_k} \otimes \mat{G}_i^{\{2\}}\right) \mat{G}_j^{\{2\}} \mat{G}_{j+1}^{\{1\}} \prod_{i = j+2}^{N} \left( \mat{G}_i^{\{1\}} \otimes \mat{I}_{\prod_{k = j+1}^{i-1} n_k}\right).
\end{equation}

\subsection{Multivariate Chebyshev Interpolation}\label{ssec:chebyshev}
We review some background on multivariate Chebyshev interpolation. We define  $T_k(x) = \cos(k\cos^{-1}(x))$ to be the Chebyshev polynomial of the first kind of degree $k$, and we then define an invertible linear map, $\varphi_{[\alpha,\beta]}:[-1, 1] \rightarrow [\alpha, \beta]$ that maps the Chebyshev nodes of the first kind on an interval $[-1, 1]$ to the interval $[\alpha, \beta]$ as follows:
$$\varphi_{[\alpha, \beta]}(x) = \frac{\alpha - \beta }{2}x + \frac{\alpha + \beta}{2}.$$
We begin by selecting Chebyshev nodes \( \{\zeta_{j}\}_{j=1}^n\) within the interval \([-1, 1]\) corresponding to the roots of the Chebyshev polynomial $T_n(x)$. Next, we define $\eta_{i} = \varphi_{[a,b]}(\zeta_i)$ for $1 \le i \le n$.
We then define the basis polynomial $\phi^{[a, b]}(\eta_{i}, \cdot)$ on the interval $[a, b]$ as
 \begin{equation}\label{eqn:cheb_trans} \phi^{[a,b]}(\eta_i, x) = \frac{1}{n}  + \frac{2}{n}\sum^{n-1}_{k=1}T_k(\varphi^{-1}_{[a,b]}(\eta_i))T_k(\varphi^{-1}_{[a,b]}(x)).\end{equation}
The interpolating properties of the basis polynomial with respect to the Chebyshev nodes $ \{\eta_{i}\}_{i=1}^{n}$ are demonstrated in the proof of Lemma \ref{lem:cheb_bound}. In particular, we show $\phi^{[a, b]}(\eta_i, \eta_j) = \delta_{i ,j}$ for $1 \le i,j \le n$. We can now use \eqref{eqn:cheb_trans} to define a degree $n-1$  polynomial that interpolates a function $g : [a,b] \rightarrow \R$ on the  nodes $\eta_1, \eta_2, \dots, \eta_n$ as
\begin{equation}\label{eqn:interpolating_polynomial}
\pi_{n-1}(x) = \sum^{n}_{i=1}g(\eta_i ) \phi^{[a, b]}(\eta_i, x).
\end{equation}

This result is perhaps well-known in the literature; here, we restate it so that it will be useful for the later discussion.

\begin{lemma}\label{lem:cheb_bound}
Let $\zeta_1, \zeta_2, \dots, \zeta_n$ be the $n$ Chebyshev nodes of the first kind in the interval $[-1, 1]$. Then
$$\max_{x \in [-1, 1]} \sum^{n}_{k=1}|\phi^{[-1, 1]}(\zeta_k, x)| \le \lambda_{n-1} \le \frac{2}{\pi}\log(n) + 1, $$
where $\lambda_{n-1}$ is the Lebesgue constant; see~\cite[Theorem 15.2]{trefethen2020approximation}.
\end{lemma}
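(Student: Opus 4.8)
The key observation is that $\phi^{[-1,1]}(\zeta_k,\cdot)$ is precisely the $k$-th Lagrange cardinal function for interpolation at the nodes $\zeta_1,\dots,\zeta_n$, so that $\sum_{k=1}^n|\phi^{[-1,1]}(\zeta_k,x)|$ is the Lebesgue function of Chebyshev interpolation and $\lambda_{n-1}$ is by definition its supremum; the first inequality is then immediate and the content of the lemma is the bound $\lambda_{n-1}\le \tfrac{2}{\pi}\log n+1$. To justify the key observation, I would invoke the discrete orthogonality of the Chebyshev polynomials at the roots of $T_n$: $\sum_{k=1}^n T_i(\zeta_k)T_j(\zeta_k)=0$ for $0\le i\ne j\le n-1$, with $\sum_k T_0(\zeta_k)^2=n$ and $\sum_k T_j(\zeta_k)^2=n/2$ for $1\le j\le n-1$. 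Consequently $\phi^{[-1,1]}(x,y)=\tfrac1n+\tfrac2n\sum_{j=1}^{n-1}T_j(x)T_j(y)$ is the reproducing kernel of the space of polynomials of degree $\le n-1$ for the discrete inner product $\inner{f}{g}=\sum_k f(\zeta_k)g(\zeta_k)$, and applying the reproducing property to each cardinal function shows $\phi^{[-1,1]}(\zeta_k,x)=\ell_k(x):=\prod_{l\ne k}\frac{x-\zeta_l}{\zeta_k-\zeta_l}$; thus \eqref{eqn:interpolating_polynomial} is simply the Lagrange form of the interpolant.

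\textbf{Reduction to a trigonometric sum.} For the upper bound I would pass to the variable $x=\cos\theta$, $\zeta_k=\cos\theta_k$ with $\theta_k=\tfrac{(2k-1)\pi}{2n}$. Using $T_n(\cos\theta)=\cos n\theta$ together with $T_n'(\zeta_k)=n(-1)^{k-1}/\sin\theta_k$ (differentiate $T_n(\cos\theta)=\cos n\theta$ and evaluate at $\theta_k$), the standard formula $\ell_k(x)=T_n(x)/\big(T_n'(\zeta_k)(x-\zeta_k)\big)$ for cardinal functions at roots of $T_n$ gives the closed form
\[
\ell_k(\cos\theta)=\frac{(-1)^{k-1}\sin\theta_k\,\cos n\theta}{n(\cos\theta-\cos\theta_k)},
\]
whence
\[
\sum_{k=1}^n|\phi^{[-1,1]}(\zeta_k,\cos\theta)|=\frac{|\cos n\theta|}{n}\sum_{k=1}^n\frac{\sin\theta_k}{|\cos\theta-\cos\theta_k|}.
\]
It then suffices to bound this last expression by $\tfrac2\pi\log n+1$ uniformly in $\theta$.

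\textbf{Main obstacle and how I would handle it.} The crux is estimating $\tfrac{|\cos n\theta|}{n}\sum_k \tfrac{\sin\theta_k}{|\cos\theta-\cos\theta_k|}$ sharply enough to land the additive constant at exactly $1$. My plan: fix $\theta$, let $\theta_m$ be the node nearest $\theta$, and isolate that one term. Near $\theta=\theta_m$ the factor $\cos n\theta$ vanishes to first order, so (by a Taylor/L'Hôpital estimate) the $k=m$ term contributes an $O(1)$ amount that one checks is controlled in the worst cases ($\theta$ midway between two nodes, or $\theta=0,\pi$). For the remaining $k\ne m$, use $|\cos\theta-\cos\theta_k|=2\,|\sin\tfrac{\theta+\theta_k}{2}|\,|\sin\tfrac{\theta-\theta_k}{2}|$ and $|\cos n\theta|\le 1$, then compare the sum over $k$ with an integral of the type $\tfrac1\pi\int_0^\pi \tfrac{dt}{|\sin t|}$ by monotonicity on the subintervals between consecutive $\theta_k$; the logarithmic divergence of that integral, truncated at scale $1/n$, produces the $\tfrac2\pi\log n$ main term. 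The delicate bookkeeping — which endpoint values enter the monotone comparison and exactly how the excluded nearest node is absorbed — is what pins the constant. Since this is a classical fact about the Lebesgue constant of Chebyshev interpolation at roots of $T_n$, an equally acceptable route for the paper is to present the reduction to the trigonometric sum above and then cite the bound (e.g.\ Rivlin, \emph{The Chebyshev Polynomials}, or Ehlich--Zeller) rather than reproduce the comparison in full.
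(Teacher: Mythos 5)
Your proposal is correct, and the first half takes a genuinely different route from the paper's. The paper establishes the identity $\phi^{[-1,1]}(\zeta_k,\cdot)=\ell_k$ by citing Mason--Handscomb for the cardinal (interpolation) property of the basis $\{\phi^{[-1,1]}(\zeta_i,\cdot)\}$, then writing the interpolant of each indicator function in the two bases and invoking uniqueness of the degree-$(n-1)$ interpolant at $n$ nodes. You instead derive the cardinal property from scratch via the discrete orthogonality relations $\sum_k T_i(\zeta_k)T_j(\zeta_k)$ at the roots of $T_n$, exhibiting $\phi^{[-1,1]}$ as the reproducing kernel of $\mathbb{P}_{n-1}$ for the discrete inner product and applying the reproducing property to $\ell_k$. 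Your version is self-contained where the paper leans on a citation, at the cost of a few extra lines; both are sound (and your route sidesteps the paper's slightly loose ``same roots, fundamental theorem of algebra'' phrasing, since the clean statement is that the difference is a degree-$(n-1)$ polynomial vanishing at $n$ points). For the bound $\lambda_{n-1}\le \frac{2}{\pi}\log n+1$ the paper simply cites Trefethen, Theorem 15.2, which is exactly the fallback you propose; your sketch of a direct proof via the closed form $\ell_k(\cos\theta)=\frac{(-1)^{k-1}\sin\theta_k\cos n\theta}{n(\cos\theta-\cos\theta_k)}$ and an integral comparison is the standard strategy and correctly identifies where the difficulty lies (pinning the additive constant at $1$), but as written it is only a plan, so if you wanted a fully self-contained lemma you would need to carry out that bookkeeping; citing the classical result, as both you and the paper do, is entirely adequate here.
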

\begin{proof}
For $1 \le i \le n $ define the indicator function $\mathbf{1}_{\zeta_i}:[-1, 1] \rightarrow \R$ such that
\[
\mathbf{1}_{\zeta_i}(t) = 
\begin{cases} 
1 & \text{if } t = \zeta_i\\
0 & \text{otherwise} .
\end{cases}
\]
The basis functions $\{\phi^{[-1,1]}(\zeta_i,x)\}_{i=1}^n$ are interpolants for the nodes $\{\zeta_i\}_{i=1}^n$ which follows from~\cite[Theorem 
 6.7]{mason2003chebyshev}. 
Let $i$ be an arbitrary integer such that $1 \le i \le n$, we can write the interpolant of $\mathbf{1}_{\zeta_i}$ in two different bases as 
$$\sum^{n}_{k=1}\mathbf{1}_{\zeta_i}(\zeta_k)\phi^{[-1, 1]}(\zeta_k, x) = \sum^{n}_{k=1}\mathbf{1}_{\zeta_i}(\zeta_k)\ell_k(x), $$
where $\{\ell_k\}_{k=1}^{n}$  are the Lagrange basis polynomials defined on the Chebyshev nodes $\zeta_1, \zeta_2, \dots, \zeta_n$. Since both bases are polynomials of degree $n-1$ with the same roots, by the fundamental theorem of algebra, they must be the same, i.e., 
$$\phi^{[-1, 1]}(\zeta_i, x) = \ell_i(x),  \quad \forall x \in \R, \> 1 \le i \le n.$$
We now obtain our desired bound:
$$\max_{x \in [-1, 1]} \sum^{n}_{k=1}|\phi^{[-1, 1]}(\zeta_k, x)| \le \max_{x \in [-1, 1]} \sum^{n}_{k=1}|\ell_k(x)| \le \lambda_{n-1} \le \frac{2}{\pi}\log(n) + 1.$$
This completes the proof.
\end{proof}
\paragraph{Multivariate approximation}
For ease of notation, we define
\begin{equation}
    (\alpha_j, \beta_j)= \left\{ \begin{array}{ll}  (\alpha_j^s, \beta_j^s) & 1 \leq j \leq d \\ (\alpha_{j-d}^p, \beta_{j-d}^p) & d+1 \leq j \leq d+d_\theta \\ (\alpha^t_{j- d+ d_\theta}, \beta^t_{j- d + d_\theta}) & d+d_\theta+1 \leq j \leq D.\end{array} \right. \qquad
\end{equation}

Given the Chebyshev nodes $\{\zeta_i\}_{i=1}^n$, we define the set of interpolating points $\{\eta_i^{(j)}\}_{i=1}^{n}$ for $1 \leq j \leq D$ as
\begin{equation}
    \eta^{(j)}_{i} = \varphi_{[\alpha_j, \beta_j]}(\zeta_i).
\end{equation}

We now consider a multivariate Chebyshev approximation of $f_{\kappa}$ \eqref{eqn:fkappa} of the form
\begin{equation}\label{eq:mdc}
f_{\kappa}(\vec\xi) \approx \sum_{i_1=1}^{n} \dots \sum^{n}_{i_D=1}m_{i_1, i_2, \dots, i_D} \prod^{D}_{j=1}\phi^{[\alpha_j, \beta_j]}(\eta^{(j)}_{i_j}, \xi_j),
\end{equation}
where  $\ten{M} \in \R^{n\times \dots \times n}$ is a $D$ dimensional coefficient tensor with each mode size $n$ with entries
$$m_{i_1, i_2, \dots, i_D} = f_\kappa(\vec\eta ), \quad \vec\eta = (\eta^{(1)}_{i_1}, \eta^{(2)}_{i_2},\dots,\eta^{(D)}_{i_D}), \qquad 1 \leq i_j \leq n, 1 \leq j \leq D.$$

\paragraph{Tensor notation} We express the same function approximation using tensor notation. Let \(\vec{x} = (x_1, \dots, x_d) \in \mathcal{X}\) and \( \vec{y} = (y_1, \dots, y_d) \in \mathcal{Y}\) represent arbitrary points from the source and target set. We define the vectorized source and target polynomials, $\vec{q}^{s}_{i}$ and $\vec{q}^{t}_{i}$, for  $1 \le i \le d$ as 
\begin{equation}\label{eqn:q_s_t} \begin{aligned}\vec{q}^{s}_{i}(x_i) = & \> \bmat{\phi^{[\alpha^{s}_{i}, \beta^{s}_{i} ]}(\eta^{(i)}_{1}, x_i) &  \dots & \phi^{[\alpha^{s}_{i}, \beta^{s}_{i} ]}(\eta^{(i)}_{n}, x_i)  } \in \mathbb{R}^{ 1 \times n}, \\
\vec{q}^{t}_{i}(y_i) = & \> \bmat{\phi^{[\alpha^{t}_{i}, \beta^{t}_{i} ]}(\eta^{(d + d_\theta + i)}_{1}, y_i) &   \dots& \phi^{[\alpha^{t}_{i}, \beta^{t}_{i} ]}(\eta^{(d + d_\theta + i)}_{n}, y_i)  } \in \mathbb{R}^{1 \times n}. \end{aligned} \end{equation} Let $\B\theta = (\theta_1, \theta_2, \dots, \theta_{d_\theta}) \in \Theta$, and we define the vectorized polynomial that encodes points in the parameter space as
\begin{equation}\label{eqn:q_p} \vec{q}^{p}_{i}(\theta_i) = \bmat{\phi^{[\alpha^{p}_{i}, \beta^{p}_{i} ]}(\eta^{(d + i)}_{1}, \theta_i) &   \dots & \phi^{[\alpha^{p}_{i}, \beta^{p}_{i} ]}(\eta^{(d + i)}_{n}, \theta_i)  } \in \mathbb{R}^{1 \times n},\end{equation}
for $1\le i \le d_\theta$. With the notation we have established so far, \eqref{eq:mdc} can now be rewritten as 
\begin{equation}\label{eq:mdc2}
    f_{\kappa}(\vec\xi) \approx \ten{M} \times_{i=1}^{d} \vec{q}_{i}^{s}(\xi_i) \times_{i = d+1}^{d + d_\theta} \vec{q}^{p}_{i - d}(\xi_i) \times_{i = d + d_\theta + 1}^{D}\vec{q}_{i - (d + d_\theta) }^{t}(\xi_i) .
\end{equation}

We now discuss how the Chebyshev interpolation of the function $f_\kappa$ translates to a parametric low-rank approximation of the kernel matrix. To this end, we follow the approach in~\cite{fong2009black,Saibaba2022} which we now summarize. 
\paragraph{Parametric low-rank approximation} We now define the factor matrices for the source and target points. The source factor matrices  $\mat{U}_{1}, \dots, \mat{U}_{d} \in \mathbb{R}^{N_s \times n}$ and the target factor matrices $\mat{V}_{1}, \dots, \mat{V}_{d} \in \mathbb{R}^{N_t \times n}$ are defined as
\begin{equation}\label{eqn:uv_factor}\mat{U}_{i} = \begin{pmatrix}
    \vec{q}^{s}_{i}([\vec{x}_1]_{i}) \\
    \vdots \\
    \vec{q}^{s}_{i}([\vec{x}_{N_s}]_{i}) 
\end{pmatrix},  \quad \mat{V}_{i} = \begin{pmatrix}
    \vec{q}^{t}_{i}([\vec{y}_{1}]_{i}) \\
    \vdots \\
    \vec{q}^{t}_{i}([\vec{y}_{N_t}]_{i})
\end{pmatrix} \qquad  \textrm{for }  \> 1 \le i \le d.
\end{equation}
We now introduce the source matrix $\mat{F}_{s} \in \mathbb{R}^{N_s \times n^d}$ and the target matrix $\mat{F}_{t} \in \mathbb{R}^{N_t \times n^d}$ defined as follows using the face-splitting product
\begin{equation}\label{eqn:fs}
\mat{F}_{s} = \mat{U}_{d} \ltimes \mat{U}_{d-1} \ltimes \ldots \ltimes  \mat{U}_{1}, \qquad 
\mat{F}_{t} = \mat{V}_{d} \ltimes \mat{V}_{d-1} \ltimes \ldots \ltimes \mat{V}_{1} .
\end{equation}
It should be noted that the matrices $\mat{F}_s$ and $\mat{F}_t$ are not formed explicitly in our calculations, and we only need to store the factor matrices $\{\mat{U}_j,\mat{V}_j\}_{j=1}^d$. 
We define the parametric tensor $\ten{M}_F(\vec{\theta})$ to be
\begin{equation}\label{eq:ilowrank}
\ten{M}_F(\vec{\theta}) = \ten{M} \times_{i = d+1}^{d + d_\theta} \vec{q}^{p}_{i - d}(
\theta_{i-d}).
\end{equation}
We are now able to iterate over the source and target points in conjunction with \eqref{eq:mdc2}, resulting in the  approximation~\cite[Section 4]{Saibaba2022}
\begin{equation} \label{eqn:paramkernelmatrix}\mat{K}(\mathcal{X},\mathcal{Y};{\vec{\theta}}) \approx \mat{F}_{s}\mat{M}_F^{\{d\}}(\vec{\theta}) \mat{F}_{t}\t.\end{equation}
This approximation is of the form~\eqref{eqn:paramlowrank} but is not feasible for practical scenarios. Observing that the tensor $\ten{M}$ contains $n^{D}$ elements, it should be noted that in practical scenarios, the value of $n^D$ can often become quite large, potentially larger than the product $N_sN_t$. In such a situation, $\mat{K}(\mathcal{X},\mathcal{Y}; \vec{\theta})$ cannot be considered to be low-rank. The Tucker approach proposed in~\cite{Saibaba2022} further compresses the core tensor $\ten{M}$, but it is computationally expensive even for  $d=3$ and $d_\theta = 2$, since $D=8$. To deal with this challenge, we further compress the tensor $\ten{M}$ using the TT format.

\section{Parametric TT Kernel (PTTK) Approximation}
In this section, we present the PTTK method for parametric low-rank approximations. The algorithm is split into two stages, an offline stage which is independent of a specific value of $\vec\theta$, and an online stage in which the low-rank approximation is instantiated for a specific value of $\bar{\vec\theta}$. In Section \ref{ssec:mainalgo}, we go over the details of the  PTTK method by first discussing the offline stage and then the online stage. In Section \ref{ssec:comp_and_sto_cost}, we discuss the computational and storage cost associated with the PTTK method, the main point being the computational cost of the offline stage is linear in $N_s$ and $N_t$ while the computational cost of the online stage is independent of $N_s$ and $N_t$. In Section \ref{ssec:Error_analysis}, we derive an error bound for the low-rank approximation to an arbitrary parametric kernel matrix in terms of the error of the TT-cross approximation and multivariate polynomial interpolation. In Section \ref{sssec:error_disc}, we discuss the error bound given in the previous section, introduce the concept of admissibility, and discuss the smoothness of $f_\kappa$ within the context of the spatial configuration of $\mc{B}_{s}$ and $\mc{B}_{t}$. Lastly, in Section \ref{ssec:Global_case}, we discuss how to apply our PTTK method to the global case ($\mc{X} = \mc{Y}$), where the parametric kernel function is also symmetric and positive definite, and it is desirable to preserve these properties in the approximation. 
\subsection{Main algorithm} \label{ssec:mainalgo}
The algorithm has two stages: an offline stage where a majority of the computations are conducted and is applicable to the entire parameter range $\Theta$, and an online stage where the computations are focused on a specific parameter $\bar{\vec{\theta}}$.

\paragraph{The offline stage} In the offline stage, there are four phases. In Phase 1, we compute the Chebyshev approximation of the function $f_\kappa :\R^D \rightarrow \R$ (here $D = 2d + d_\theta$) and the factor matrices $\{\mat{U}_j\}_{j=1}^d$, $\{\mat{V}_j\}_{j=1}^d$ that define the source matrix $\mat{F}_s$ and the target matrix $\mat{F}_t$, respectively.

In Phase 2, we now compute a TT approximation to the coefficient tensor $\ten{M}$, defined in the previous section, by applying the TT greedy-cross algorithm (Algorithm~\ref{alg:greedy}) on the function $f_\kappa$ which defines the entries of the tensor $\ten{M}$.  We now obtain the following TT approximation
$$\ten{M} \approx \tenh{M} \equiv [\ten{G}_1, \ten{G}_{2}, \dots, \ten{G}_{D}],$$
where $\ten{G}_{i} \in \mathbb{R}^{r_{i-1} \times n \times r_i}$ for $1 \le i \le D$ with the convention $r_0 = r_D = 1$.
The ranks of $\tenh{M}$ depend on the properties of $f_{\kappa}$ such as smoothness, behavior of its derivatives, the domain on which it is defined, etc. 
For more details we refer to Section \ref{sssec:error_disc}. Let $\vec{\theta}\in \Theta$, and we now define the matrix $\mat{H}(\vec{\theta}) \in \mathbb{R}^{r_d \times r_{d + d_\theta}}$ as
\begin{equation}\label{eqn:hmat}\mat{H}(\vec{\theta}) \equiv \mat{H}_1(\theta_1) \cdots \mat{H}_{d_\theta}(\theta_{d_\theta}) \qquad\text{where} \quad \mat{H}_j(\theta_j)\equiv \ten{G}_{d+j} \times_2 \vec{q}^{p}_{j}(\theta_{j}).\end{equation}
Observe that we can obtain a TT-decomposition of the tensor  $\tenh{M}_{F}(\vec{\theta})$ in terms of the TT-cores of $\tenh{M}$ as
\begin{equation}\label{M(theta)_cores}
\tenh{M}_F(\vec{\theta}) =  [\ten{G}_{1}, \dots, \ten{G}_{d}, \ten{T}^{(2)}(\mat{H}(\vec{\theta})), \ten{G}_{d+d_\theta + 1}, \dots, \ten{G}_{D}].
\end{equation}
Here $\ten{T}^{(\cdot)}(\cdot)$ is a twisting tensor function defined in~\eqref{eqn:twist}. The reader should note that $(\ten{T}^{(2)}(\mat{H}(\vec{\theta})))^{\{1\}} = \mat{H}(\vec{\theta})$ and the term represents a degenerate tensor (matrix). We now use unfolding formula  \eqref{eq:flattening_core_j} to obtain an initial approximation to  $\mat{K}(\mathcal{X},\mathcal{Y};{\vec{\theta}})$ in terms of the cores of the tensor $\tenh{M}_F(\vec{\theta}):$
$$\mathat{M}_F^{\{d\}} (\vec{\theta}) = \mat{L}\mat{H}(\vec{\theta})\mat{R}\t, $$ 
where the matrices $\mat{L}$ and $\mat{R}$ are defined implicitly by the formulas (see~\eqref{eq:flattening_core_j})
 \[ \mat{L} = \prod_{i=1}^{d-1}(\mat{I}_{n^{d-i} }  \otimes \mat{G}^{\{2\}}_i ) \mat{G}^{\{2\}}_{d}, \qquad \mat{R}\t = \mat{G}^{\{1\}}_{d + d_\theta + 1} \prod_{i = 1 }^{d - 1}( \mat{G}^{\{1\}}_{d + d_\theta + 1 + i} \otimes \mat{I}_{n^i}). \]
An important point is that the matrices $\mat{L}$ and $\mat{R}$ are not formed explicitly. The summary of the story so far is 
\[ \mat{K}(\mc{X},\mc{Y};{\B\theta}) \approx \mat{F}_s \mat{M}_F^{\{d\}}(\B\theta)\mat{F}_t\t \approx  (\mat{F}_s\mat{L})\mat{H}(\B\theta)(\mat{F}_t\mat{R})\t. \] 
Now Phase 3 can commence which involves computing $\mat{S}\equiv \mat{F}_s \mat{L}$ and $\mat{T} \equiv \mat{F}_t\mat{R}$. These matrices can be computed exactly (in exact arithmetic) through a recursive computation and exploiting the properties of face-splitting/Khatri-Rao products defined in Section \ref{ssec:matrix_operations}, as demonstrated in Phase 3 of the offline stage (Algorithm \ref{alg:offline}). The correctness of Phase 3 in the offline stage can be verified through proof by induction. We illustrate the computation for the case $d = 3$ 
\begin{align*}
    \mat{S}  &=  (\mat{U}_{3} \ltimes \mat{U}_{2} \ltimes \mat{U}_{1})(\mat{I}_{n} \otimes \mat{I}_{n} \otimes \mat{G}_{1}^{\{2\}})(\mat{I}_{n} \otimes \mat{G}_{2}^{\{2\}})\mat{G}_{3}^{\{2\}}\\
      &= (\mat{U}_{3} \ltimes \mat{U}_{2} \ltimes \mat{U}_{1}\mat{G}_{1}^{\{2\}})(\mat{I}_{n} \otimes \mat{G}_{2}^{\{2\}})\mat{G}_{3}^{\{2\}} \\
      &= (\mat{U}_{3} \ltimes ((\mat{U}_{2} \ltimes \mat{U}_{1}\mat{G}_{1}^{\{2\}})\mat{G}_{2}^{\{2\}}))\mat{G}_{3}^{\{2\}}. 
\end{align*}
The computations for  $\mat{T}$ can be approached analogously. We provide a graphical representation of Phase 3 in Figure \ref{fig:offline_mode}. Finally, to obtain a more compressed representation, Phase 4 performs TT rounding (Algorithm \ref{alg:tt_round}) on the tensor in the TT format
\begin{equation*}
 [\ten{T}^{(1)}(\mat{S}), \ten{G}_{d+1}, \ten{G}_{d+2}, \dots, \ten{G}_{d+d_\theta}, \ten{T}^{(3)}(\mat{T}^{\top})].
\end{equation*}
The compressed representations are stored as the matrices $\mat{S}$, $\mat{T}$, and the core tensors $\{\ten{G}_{d+1}, \dots,\ten{G}_{d+d_\theta}\}.$

\begin{figure}
    \centering
    \includegraphics[scale=.75]{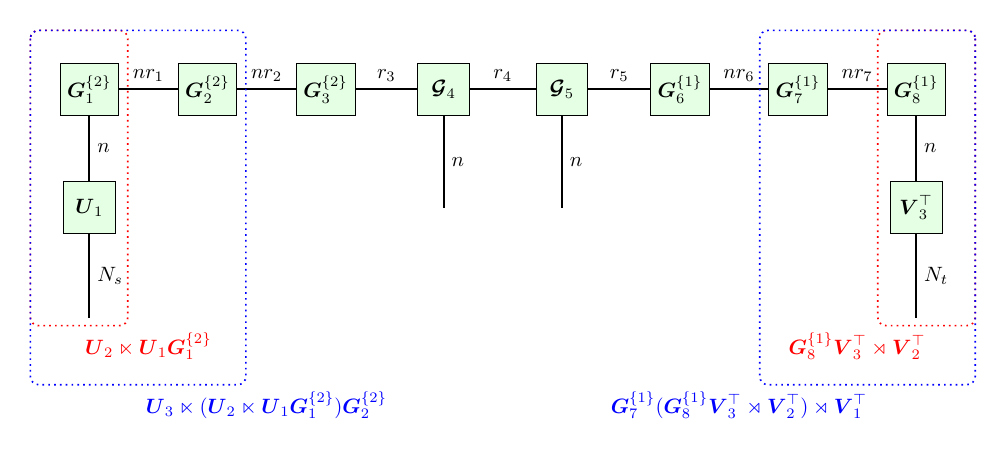}
    \caption{Tensor network diagram of Phase 3 of the offline stage for the case $d=3$ and $d_\theta=2$, where each enclosing dashed rectangle represents the resulting matrix as a tensor network node, obtained by applying the face-splitting/Khatri-Rao product of the proper source/target factor matrix to the consequent matrix obtained after performing the contractions enclosed within the dashed rectangle.}
    \label{fig:offline_mode}
\end{figure}

\begin{algorithm}[!ht]
\begin{algorithmic}[1]
  \caption{PTTK approach: Offline Stage}\label{alg:offline}
  \Require{Source and target points $\mc{X} \subset \mc{B}_s$ and $\mc{Y} \subset \mc{B}_t$, parametric kernel $\kappa(\vec{x},\vec{y}; \B\theta )$ and function $f_\kappa$, input tolerance $\epsilon_{\rm tol}$} 
  \Ensure Matrices $\mat{S}, \mat{T}$, cores $\{\ten{G}_{d+1},\dots,\ten{G}_{d+d_\theta}\}$
    \LineComment{Phase 1: Chebyshev approximation}
    \State Construct matrices $\mat{U}_1,\dots,\mat{U}_d$ and $\mat{V}_1,\dots,\mat{V}_d$ and tensor $\ten{M}$ defining the Chebyshev approximation of the function $f_\kappa$ (see Section~\ref{ssec:chebyshev})
    \LineComment{Phase 2: TT approximation}
    \State Approximate tensor $\ten{M}$ to get tensor $\tenh{M} = [\ten{G}_1,\dots,\ten{G}_D]$  using TT greedy-cross (Algorithm~\ref{alg:greedy}) with input $\epsilon_{\rm tol}$
   \LineComment{Phase 3: Construct factor matrices}
   \State $\mat{S} \gets \mat{U}_{1}\mat{G}_{1}^{\{2\}}$ 
  \For{$i=2,\dots, d $}
   \State $\mat{S} \gets (\mat{U}_{i} \ltimes \mat{S})\mat{G}_{i}^{\{2\}} $
  \EndFor
    \State $\mat{T} \gets \mat{G}_{D}^{\{1\}}\mat{V}_{d}^{\top}$ 
  \For{$i= 1, \dots, d - 1 $}
   \State $\mat{T} \gets \mat{G}_{D - i}^{\{1\}}(\mat{T} \rtimes \mat{V}_{d - i}^{\top}) $
  \EndFor
  \State $\mat{T} \gets \mat{T}^{\top}$
  \LineComment{Phase 4: Additional compression using TT-rounding}
  \State $[\ten{T}^{(1)}({\mat{S}}), \ten{G}_{d+1},  \dots, \ten{G}_{d+d_\theta}, \ten{T}^{(3)}(\mat{T}^{\top})] \gets  $TT-Round(  \\ ~ ~ ~ ~ ~ ~ ~ ~  ~ ~ ~ ~ ~ ~ ~ ~ ~ ~ ~ ~ ~ ~ ~ ~ ~ ~ ~   $[\ten{T}^{(1)}(\mat{S}), \ten{G}_{d+1},  \dots, \ten{G}_{d+d_\theta}, \ten{T}^{(3)}(\mat{T}^{\top})]$,  $\epsilon_{\rm tol}$)
  \State 
   \Return Factor matrices $\mat{S}, \mat{T}$, cores $\{\ten{G}_{d+1},\dots,\ten{G}_{d+d_\theta}\}$\;
  \end{algorithmic}
\end{algorithm}

\paragraph{The online stage} The online stage is fairly straightforward. Given a parameter $\vec{\bar\theta} \in \Theta$, it only requires evaluating the matrix $\mat{H}(\vec{\bar\theta})$ by contracting the cores corresponding to the parameter dimensions (using~\eqref{eqn:hmat}) to obtain the low-rank approximation in~\eqref{eqn:paramlowrank}. The details are given in Algorithm~\ref{alg:online} and the contractions involved for a fixed case are displayed in Figure \ref{fig:online}.

\begin{figure}[!ht]
\includegraphics[scale=.9]{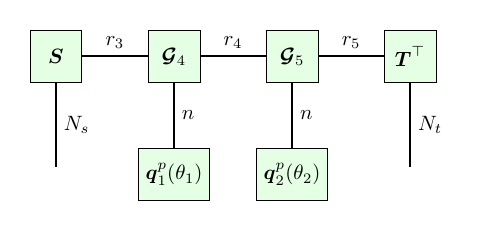}
\centering
\caption{Tensor network diagram of the online stage for fixed dimensions $d = 3$ and $d_\theta = 2$ with a particular parameter $ \vec{\theta} = (\theta_1, \theta_2)$, obtained after performing the operations illustrated in Figure \ref{fig:offline_mode}.}
\label{fig:online}
\end{figure}

\begin{algorithm}[!ht]
\begin{algorithmic}[1]
  \caption{PTTK approach: Online Stage}\label{alg:online}
  \Require{Instance of parameter  $\vec{\bar{\theta}} \in \Theta$, Cores $\{\ten{G}_{d+1},\dots,\ten{G}_{d+d_\theta}\}$} 
  \Ensure{Matrix $\mat{H}(\vec{\bar\theta})$ }
  \State $\mathat{H} = \mat{I}$
  \For{$i = 1 \dots d_\theta$}\State $\mat{H}_{i} := \ten{G}_{d + i} \times_{2} \vec{q}^{p}_{i}(\bar\theta_i)$ and $\mathat{H} \leftarrow \mathat{H} \mat{H}_i$
\EndFor
\State
\Return Core matrix $\mathat{H} \equiv \mat{H}(\vec{\bar\theta})$
  \end{algorithmic}
\end{algorithm}



\subsection{Computational and Storage Cost} \label{ssec:comp_and_sto_cost}
We now give details on the computational and storage costs of Algorithms~\ref{alg:offline} and~\ref{alg:online}.  
\paragraph{Computational Cost} 
Let us first consider the computational cost of the offline stage, as described in Algorithm~\ref{alg:offline}. Forming the factor matrices $\mat{U}_{j}$ and $\mat{V}_{j}$ for each $j$ requires $\mc{O}(n^2(N_s + N_t))$ flops since it requires $\mc{O}(n^2)$ flops to evaluate a vectorized polynomial at a point. In all there are $2d$ factor matrices for an overall cost of $\mc{O}(n^2d(N_s + N_t))$ flops. In Phase 2, the computational cost of Algorithm~\ref{alg:greedy} is $\mc{O}(Dnr^3)$ flops and $\mc{O}(Dnr^2)$ kernel evaluations. With an additional cost of $\mc{O}(Dn^2)$ kernel evaluations and $\mc{O}(Dn^2)$ flops for the initialization (Algorithm~\ref{alg:greedy_init}). Recall that $r$ is the maximum rank of the TT approximation. In Phase 3, forming the products $(\mat{U}_{i} \ltimes \mat{S})\mat{G}_{i}^{\{2\}}$ and $\mat{G}_{D-i}^{\{1\}}(\mat{T} \rtimes \mat{V}_{d-i}^{\top})$ for each $i$ requires $\mc{O}((N_s+N_t)nr^2)$ flops, so the total computational cost of Phase 3 is $\mc{O}((N_s+N_t)dnr^2)$ flops. Lastly, Phase 4 requires applying TT rounding (Algorithm \ref{alg:tt_round}) to the TT tensor,
$$[\ten{T}^{(1)}({\mat{S}}), \ten{G}_{d+1},  \dots, \ten{G}_{d+d_\theta}, \ten{T}^{(3)}(\mat{T}^{\top})], $$
which requires $\mc{O}(d_\theta nr^3 + N_sr^2 + N_tr^2)$ flops. We now consider the computational cost of the online stage, as described in Algorithm \ref{alg:online}. Evaluating the $d_\theta$ vectorized parametric polynomials at a point requires $\mc{O}(d_\theta n^2)$ flops. Performing a mode-2 contraction with tensors $\ten{G}_{d+i}$ for $1 \le i \le d_\theta$ and a $n$ dimensional vectorized parametric polynomial requires $\mc{O}(nr^2)$ flops, and mode-2 contraction must be performed $d_\theta$ times, which gives us an overall computational cost of $\mc{O}(d_\theta (nr^2 + n^2))$ flops. Then forming the matrix $\mathat{H}$ requires $\mc{O}(d_\theta r^3)$ flops. The overall computational cost for the offline stage is $\mc{O}( d(nr^2 + n^2)(N_s + N_t) + Dnr^3 + Dn^2 )$ flops and $\mc{O}(Dn^2 + Dnr^2)$ kernel evaluations. The overall computation cost for the online stage is 
$\mc{O}(d_\theta (n^2 + nr^2 + r^3))$ flops. An important point here is that only the offline stage requires kernel evaluations and has a computational cost dependent on $N_s$ and $N_t$; the cost of the online stage does not have any dependence on $N_s$ and $N_t$.

\paragraph{Storage Cost}
The storage cost of our PTTK method is determined by the cost to store the matrices $\mat{S}, \mat{T}$ and TT-cores $\{\ten{G}_{d+1}, \ten{G}_{d+2}, \dots, \ten{G}_{d+d_\theta}\}$. Therefore, the storage cost of our PTTK method is
$N_sr_{d} + N_tr_{d+d_\theta} + \sum_{j=1}^{d_\theta} nr_{d + j - 1}r_{d + j } $ floating point numbers.

\subsection{Error analysis} \label{ssec:Error_analysis}
There are two sources of error in the PTTK approximation: the error due to the Chebyshev approximation and the error due to the TT approximation. We make assumptions on the smoothness of $f_\kappa$ (see~\eqref{eqn:fkappa}) and the error in the TT approximation. Let $\Xi = \mc{B}_s \times \Theta \times \mc{B}_t  \subset \R^D$ be the domain of $f_\kappa$, which we can write as the hypercube $\Xi =  \bigtimes_{i=1}^D [\alpha_i,\beta_i ]$, and we define  $$\textrm{diam}_{\infty}(\Xi) \equiv \max \{\beta_i - \alpha_i: 1 \le i \le D  \}.$$
 We assume $f_\kappa \in C^{\infty}(\Xi)$, and there exist $ C_f  \ge 0 , \gamma_{f} > 0$, and $\sigma \in \mathbb{N}$ such that the following bound
 \begin{equation} \label{eq:regularity_bound}
     \sup_{\vec{x} \in \Xi} | \partial^{v}_{i} f_\kappa(\vec{x}) | \le \frac{C_f(v + \sigma - 1)!}{\gamma_f^v(\sigma-1)!} \quad \textrm{holds for all integers   }  1 \le i \le D, v \ge 0. 
\end{equation}
Additionally, for integers $k \ge 1 $ we define the following term 
 \[ E(k) \equiv 2eDC_{f}(\lambda_{k-1}+1)^D (k+1)^\sigma \left(1 + \frac{\textrm{diam}_{\infty}(\Xi)}{\gamma_f}\right)\varsigma\left(\frac{2 \gamma_f}{\textrm{diam}_{\infty}(\Xi)}\right)^{-k}
\] 
where the function $\varsigma$ is defined to be $\varsigma:r \mapsto r + \sqrt{1 + r^2}$.
\begin{proposition}\label{prop:error}
 Consider the notation and assumptions established in this section. 
The error in the PTTK approximation satisfies
\[
\max_{\theta \in \Theta} \|\mat{K}(\mc{X}, \mc{Y};{\vec{\theta}}) - \mat{S}\mat{H}(\vec{\theta})\mat{T}^{\top}\|_C \le E(n) +  \lambda_{n-1}^D \| \ten{M} - \tenh{M}\|_{C},
\]
where $\tenh{M}$ is the TT approximation of $\ten{M}$. Recall $\lambda_{n-1}$ indicates the Lebesgue constant of the Chebyshev nodes.
\end{proposition}

In the bound above, the error has two components $E(n)$ which represents the error in the Chebyshev approximation and the second term $\lambda_{n-1}^D \|\ten{M} - \tenh{M}\|_C$ that represents the error in the TT approximation which is amplified by the interpolation through the factor $\lambda_{n-1}^D$. The factor $E(n)$ shows exponential decay with the number of Chebyshev points $n$, whereas the factor $\lambda_{n-1}^D$ shows polylogarithmic growth in $n$.

\begin{proof}[Proof of Proposition~\ref{prop:error}]

Using the expression $\mat{SH}(\vec\theta)\mat{T}\t = \mat{F}_s\mathat{M}_F^{\{d\}}(\vec\theta)\mat{F}_t\t $ and the triangle inequality, we obtain
    \begin{equation}\label{eqn:inter0}
\begin{aligned}
     \|\mat{K}(\mc{X},\mc{Y};{\vec\theta}) - \mat{SH}(\vec\theta)\mat{T}\t\|_C \leq & \> \underbrace{\|\mat{K}(\mc{X},\mc{Y};{\vec\theta}) -  \mat{F}_s \mat{M}_F^{\{d\}}(\vec\theta)\mat{F}_t\t \|_C}_{\equiv \alpha} ~ +\\
     & \> \qquad \underbrace{\|\mat{F}_s \mat{M}_F^{\{d\}}(\vec\theta)\mat{F}_t\t -  \mat{F}_s\mathat{M}_F^{\{d\}}(\vec\theta)\mat{F}_t\t\|_C}_{\equiv \beta}  .
\end{aligned}\end{equation}
We deal with both the terms separately. 

We tackle the first term $ \alpha$. By the assumptions on $f_\kappa$ and as a consequence of Corollary 4.21 in \cite{borm2010efficient}, the error in the Chebyshev norm satisfies
\begin{equation}\label{eqn:inter1} \max_{\vec\theta\in \Theta} \| \mat{K}(\mc{X},\mc{Y};{\vec\theta}) - \mat{F}_s \mat{M}^{\{d\}}(\vec\theta)\mat{F}_t\t \|_C \leq  E(n). \end{equation}

We tackle the second term $\beta$. Fix a point $\vec\xi = (\vec{x},\vec\theta,\vec{y}) \in \mc{X} \times \Theta \times \mc{Y}$ at which the Chebyshev norm of the term $\beta$ is acquired. For this point $\vec\xi$, the corresponding entry of $\mat{F}_s\mat{M}_F^{\{d\}}(\vec\theta)\mat{F}_t\t$ can be expressed as $\ten{M} \times_{j=1}^D \vec\phi_j(\xi_j)$, where the vectors of polynomials $\vec\phi_j(\cdot)$ are defined as (see~\eqref{eqn:q_s_t} and~\eqref{eqn:q_p})
\[ \vec\phi_j(\xi_j) = \left\{ \begin{array}{cc} \vec{q}_j^s(\xi_j) & 1 \le j \le d \\ \vec{q}_{j-d}^p(\xi_j) & d+1 \le j \le d+d_\theta \\ \vec{q}^t_{j-(d+d_\theta)}(\xi_j) & d+d_\theta+1 \le j \le D.  \end{array}  \right. \] 
Similarly, with the notation just defined, the corresponding entry of $\mat{F}_s\mathat{M}_F^{\{d\}}(\vec\theta)\mat{F}_t\t$ can be expressed as $\tenh{M} \times_{j=1}^D \vec\phi_j(\xi_j)$. Therefore, we can bound the error $\beta$ as 
\[ \begin{aligned}\|\mat{F}_s \mat{M}_F^{\{d\}}(\vec\theta)\mat{F}_t\t -  \mat{F}_s\mathat{M}_F^{\{d\}}(\vec\theta)\mat{F}_t\t\|_C = & \> |(\ten{M}- \tenh{M}) \times_{j=1}^D\vec{\phi}_j(\xi_j) |\\ 
\le 
  & \>  |\sum_{i_1,\dots,i_D} (\ten{M}- \tenh{M})_{i_1,\dots,i_D} \prod_{j=1}^D[\vec{\phi}_j(\xi_j)]_{i_j} |, \\
  \le & \>  \| \ten{M}- \tenh{M}\|_C |\sum_{i_1,\dots,i_D}  \prod_{j=1}^D |[\vec{\phi}_j(\xi_j)]_{i_j}| .
\end{aligned}\]

Next, we use the equality $\sum_{i_1,\dots,i_D}  \prod_{j=1}^D |[\vec{\phi}_j(\xi_j)]_{i_j}|  = \prod_{j=1}^D \left(\sum_{i_j=1}^n |[\vec{\phi}_j(\xi_j)]_{i_j}| \right)   $ to write
\begin{equation}\label{eqn:inter2} \begin{aligned} 
\|\mat{F}_s \mat{M}_F^{\{d\}}(\vec\theta)\mat{F}_t\t -  \mat{F}_s\mathat{M}_F^{\{d\}}(\vec\theta)\mat{F}_t\t\|_C \leq & \> \|\ten{M}-\tenh{M}\|_C   \prod_{j=1}^D \|\vec{\phi}_j(\xi_j)\|_1 \\
\leq & \> \|\ten{M}-\tenh{M}\|_C (\lambda_{n-1})^D.  \end{aligned}\end{equation} 

The inequality $\|\vec{\phi}_j(\xi_j)\|_1 \le 
\lambda_{n-1}$ follows from Lemma \ref{lem:cheb_bound}. The stated result follows by plugging the intermediate results~\eqref{eqn:inter1} and~\eqref{eqn:inter2} into~\eqref{eqn:inter0}.
\end{proof}

The results of \cref{prop:error} assume the existence of a TT approximation $\tenh{M}$. In this next result, we show that such an approximation can be found.

\begin{proposition}\label{prop:compress} 
Consider the notation and assumptions established in this section.
For an integer $p$ s.t. $1 \le p \le n$, there exists a TT decomposition of $\ten{M}$, which we label as $\tenh{M}$ with TT  ranks $r_{k+1} \le \min\{p^{k}, p^{D-k} \}$ for $1 \le k < D -1$ ($r_1 = r_D = 1$), that satisfies
$$\| \ten{M} - \tenh{M}\|_{C} \le E(p, n),$$
whenever $ p < n$ and $\| \ten{M} - \tenh{M}\|_{C} = 0$ if $p = n$. 
 
\end{proposition}
\begin{proof}
The proof strategy is based on~\cite{Shi-Townsend}. 
The idea is to construct a multi-variate Chebyshev polynomial $g$ that interpolates $f_{\kappa}$ at the $p^D$ Chebyshev nodes defined on $\Xi$ using the method outlined in Section \ref{ssec:chebyshev}. We define the tensor $\tenh{M}$ with entries
$$[\tenh{M}]_{i_1, i_2, \dots, i_D} = g(\eta^{(1)}_{i_1}, \eta^{(2)}_{i_2}, \dots, \eta^{(D)}_{i_D}), \quad 1 \le j \le D, 1 \le i_j \le p.$$
Note, the Chebyshev nodes $\{\eta^{(j)}_{i_j} \}_{i_j =1}^{p}$ for $1 \le j \le D$ are associated with the Chebyshev polynomial $g$.
Then, by Lemma 3.1 and (3.3) from \cite{Shi-Townsend}, we can conclude there exists an exact TT decomposition of $\tenh{M}$,
$$\tenh{M} = [\ten{C}_1, \ten{C}_2, \dots, \ten{C}_{D}],$$
with TT ranks $r_{k} \le  \min\{p^{k}, p^{D -k} \}$ for $1 \le k < D$ such that $\|\ten{M} - \tenh{M} \|_C \le \|f_{\kappa} - g \|_{\infty}$. Next, by the assumptions on $f_\kappa$ and as a consequence of Corollary 4.21 in \cite{borm2010efficient}, we have
$$\|\ten{M} - \tenh{M} \|_C \le \|f_{\kappa} - g \|_{\infty} \le  E(p).$$
 If $p = n$ then the polynomial $g$ interpolates $f_\kappa$ exactly, and $\ten{M} = \tenh{M}$. 
\end{proof}

\subsection{Discussion of error analysis} \label{sssec:error_disc}
We have demonstrated that the TT ranks of $\tenh{M}$ are bounded, and have derived a relationship between the TT ranks of $\tenh{M}$ and the derivatives of $f_\kappa$ (see \eqref{eq:regularity_bound}). Note the bound is quite pessimistic concerning the TT ranks, and this is to be expected since we did not assume additional structure on $f_{\kappa}$. The existence of an approximation of $f_{\kappa}$ into a summation of separable basis functions allows us to derive a relationship between the TT ranks and the TT approximation error. In the context of Proposition~\ref{prop:compress}, this approximation was obtained via multi-variate Chebyshev interpolation. The following \cite{DBLP:journals/corr/WangLD17,randomfeatures-1, cotter2011explicit, greengard1991fast} discuss approximations of specific classes of kernel functions into a summation of separable basis functions.

We now introduce the concept of admissibility. We first define the distance between $\mathcal{B}_{s}$ and $\mathcal{B}_{t}$:
$$\textrm{dist}(\mathcal{B}_{s}, \mathcal{B}_{t}) = \min_{\vec{x} \in \mathcal{B}_{s}, \vec{y} \in \mathcal{B}_{t}} \|\vec{x} - \vec{y}\|_{2}.$$ 
We say that $\mc{B}_{s}$ and $\mc{B}_{t}$ are \emph{admissible} whenever $\textrm{dist}(\mc{B}_{s}, \mc{B}_{t}) > 0$ and we say that $\mc{B}_{s}$ and $\mc{B}_{t}$ are \emph{weakly admissible} whenever $\mc{B}_{s}$ and $\mc{B}_{t}$ intersect at a single point. This definition for $\mc{B}_{s}$ and $\mc{B}_{t}$ being admissible is chosen because many kernels (for example, the Bi-harmonic kernel) have singularity at the origin. See Table~\ref{tab:kern-table} for a listing of kernels used in the numerical experiments section and their definitions.

Let us first consider the kernel functions $\kappa$ that are nonparametric (i.e. $d_\theta = 0$).  If $\kappa$ is asymptotically smooth (Definition 4.14 from \cite{Hackbusch2015}) and $\mc{B}_{s}$ and $\mc{B}_{t}$ are admissible, then $f_\kappa \in C^\infty(\Xi)$,  by Lemma E.5 from \cite{Hackbusch2015}. 
This means the Bi-harmonic, Laplace-3D, Laplace-2D, and Thin-plate kernels, which are known to be asymptotically smooth, result in $f_\kappa \in C^{\infty}(\Xi)$ if $\mc{B}_{s}$ and $\mc{B}_{t}$ are admissible.

We now turn to the parametric case. We consider the special case that $\kappa$ is an isotropic kernel with just the length scale parameter; as in, $\vec{\theta} = (\ell)$ s.t. $\ell > 0 $  and $$\kappa(\vec{x},\vec{y}; \ell) = g\left(\frac{\|\vec{x} - \vec{y}\|_{2}}{\ell}\right), $$   where $g:\R_+ \rightarrow \R$. This special case is of importance, since it covers the majority of parametric kernels considered in this paper. We also first consider the admissible case ($\text{dist}(\mc{B}_{s}, \mc{B}_{t}) > 0$). If $g$ is in $C^{\infty}(\R_{+})$ then  $f_{\kappa} \in C^{\infty}(\Xi)$ by Theorem 3.1 from \cite{edwards2012advanced}. All the parametric kernels we consider, such as the Exponential, Mat\'ern (with a fixed $\nu$), and Thin-Plate spline, can be expressed in the isotropic form where $g \in  C^{\infty}(\R^{+})$. Hence, many of the parametric kernels we consider result in  $f_\kappa \in C^{\infty}(\Xi)$ whenever $\textrm{dist}(\mc{B}_{s}, \mc{B}_{t}) > 0.$ Even if $\mc{B}_s$ and $\mc{B}_t$ are not admissible, some kernels such as the Squared-Exponential and Multiquadric kernel result in $f_k \in C^\infty(\Xi)$.

\subsection{Global low-rank approximation} \label{ssec:Global_case}
In this section, we consider the case that the source and target boxes ($\mc{B}_s$ and $\mc{B}_t$) are the same and assume that the source and target points ($\mc{X}$ and $\mc{Y}$) are the same. If the matrix kernel function $\kappa(\vec{x},\vec{y};{\vec\theta}) = \kappa(\vec{y},\vec{x};{\vec\theta})$ is symmetric in its arguments, then the low-rank approximation should be symmetric, and if the kernel function is symmetric positive definite, then the approximation ought to also be symmetric positive semi-definite. The algorithms described in Section~\ref{ssec:mainalgo} currently do not consider the problem's symmetry nor the kernel function being positive definite when $\mc{X} = \mc{Y}$. However, we can modify them appropriately, which we will now discuss. 
\paragraph{Method}
Given a matrix $\mat{A} \in \R^{ m \times m}$, its symmetric part $\frac12 (\mat{A} + \mat{A}\t)$ is the best symmetric approximation to $\mat{A}$~\cite[Theorem IX.7.1]{bhatia1997matrix}. Therefore, to obtain a symmetric low-rank approximation, we can work with the symmetric part, i.e.,
\begin{equation}\label{eqn:pttk1} \frac12\left(\mat{SH}(\vec{\bar \theta})\mat{T}\t + \mat{T}(\mat{H}(\vec{\bar \theta}))\t\mat{S}\t\right) = \bmat{\mat{S} & \mat{T}} \mathat{H} \bmat{\mat{S}\t \\ \mat{T}\t},  \end{equation}
where $\mathat{H}$ is a block matrix defined as 
\[  \mathat{H} \equiv \bmat{ & \frac12\mat{H}(\vec{ \bar\theta}) \\ \frac12 (\mat{H}(\vec{ \bar\theta}))\t  & }.\]
Therefore, this low-rank representation can be obtained without any additional storage requirements, but the downside is that the rank of the low-rank approximation is doubled. We now take the thin QR decomposition of the block matrix composed of $\mat{S}$ and $\mat{T}$ as
$ \bmat{\mat{S} & \mat{T}} = \mat{QR}.$
We designate this point of the method as the end of the offline stage because the matrices $\mat{Q}$ and $\mat{R}$ can be pre-computed and stored.

\begin{algorithm}[!ht]
\caption{}
\begin{algorithmic}[1]
  \caption{PTTK-Global: Offline and Online Stages}\label{alg:pttk_global}
  \Require{Source points $\mc{X} \subset \mc{B}_s$ , positive definite parametric kernel $\kappa(\vec{x},\vec{y}; \B\theta)$ and function $f_\kappa$, input tolerance $\epsilon_{\rm tol}$, Compress (true/false)}
  \Ensure{Matrices $\mat{Q}, \mat{W}$, defining the low-rank approximation $\mat{QWQ}\t$}
  \LineComment{Offline stage}
    \State Compute $\mat{S}, \mat{T}, \{\ten{G}_{d+1},\dots,\ten{G}_{d+d_\theta}\} $ using Algorithm \ref{alg:offline} with the inputs above and additionally with $\mc{X} =\mc{Y}$ and $\mc{B}_s = \mc{B}_t$.
    \State Compute thin-QR $ \mat{Q}, \mat{R} \gets \textrm{thin-QR}([\mat{S} , \mat{T}])$
  \LineComment{Online stage}
\State Compute $\mat{H}(\vec{\bar{\theta}})$ using Algorithm~\ref{alg:online} with inputs  $\{\ten{G}_{d+1},\dots,\ten{G}_{d+d_\theta}\}$
\State $\mat{U}, \mat{\Lambda} \gets \textrm{eigendecomposition}(\mat{R}   \mathat{H}\mat{R}^{\top})$, with  $\mat{\Lambda} =  \textrm{diag}(\lambda_1, \lambda_2, \dots, \lambda_{k})$ satisfies $|\lambda_1| \ge |\lambda_2| \ge \dots \ge |\lambda_{k}|$, and where $\mathat{H}$ is defined in~\eqref{eqn:pttk1}
\State Zero out the negative eigenvalues (if any) in $\mat{\Lambda}$ and discard the corresponding eigenvectors to obtain matrices $\mat{\Lambda}$ and $\mat{U}$
\If{Compress}
\State Further zero out small eigenvalues of $\mat{\Lambda}$ to obtain $\mathat{\Lambda}$ and discard the corresponding eigenvectors to obtain $\mathat{U}$ such that 
$${\| \mat{Q}\mathat{U} \mathat{\Lambda} (\mat{Q}\mathat{U})^{\top} - \mat{S}\mat{H}(\vec{\bar \theta })\mat{T}\|_{F}}\le  \epsilon_{\rm tol} \| \mat{S}\mat{H}(\vec{\bar \theta })\mat{T} \|_{F}.$$
\State Set $\mat{Q} \gets \mat{Q}\mathat{U}$, $\mat{W} \gets \mathat{\Lambda}$
\Else
\State Set $\mat{W} \gets \mat{U}\mat{\Lambda}\mat{U}^{\top}$
\EndIf 
\State 
\Return $\mat{Q}$ and $\mat{W}$ 
  \end{algorithmic}
\end{algorithm}
The online stage now begins  by instantiating a  particular parameter $\vec{\bar \theta} \in \Theta$. Using the parameter $\vec{\bar \theta}$, we take the eigendecomposition of the symmetric matrix $ \mat{R} \mathat{H}  \mat{R}^{\top} = \mat{U\Lambda U}\t.$
It is assumed that the kernel function is positive definite; thus,  we zero out the negative eigenvalues and discard the small eigenvalues and the appropriate eigenvectors. From discarding the relevant eigenvalues along with the appropriate eigenvectors, we obtain a new pair of induced matrices $\mathat{\Lambda}$ and $\mathat{U}$. We can now define the matrix $\mat{Z} = \mat{Q}\mathat{U}$, and we now have the low-rank factorization defined by the expression $\mat{K}(\mathcal{X}, \mathcal{X};{\vec{\bar\theta}})\approx \mat{Z} \mathat{\Lambda}\mat{Z}\t$. Note, the additional compression afforded by this low-rank factorization. Alternatively, we can form the matrix $\mat{A} = \mathat{U}\mathat{\Lambda}\mathat{U}^{\top}$ and obtain a low-rank factorization:
\begin{equation}\label{eqn:uncompressed_form}
\mat{K}(\mathcal{X}, \mathcal{X};{\vec{\bar\theta}}) \approx \mat{Q}\mat{A}\mat{Q}^{\top}.
\end{equation}
The advantage of the alternative low-rank factorization is that $\mat{A}$ is the only matrix in the expression dependent on $\vec{\bar\theta}$; however, we lose out on additional compression afforded by the initial low-rank factorization. We formalize this discussion in Algorithm~\ref{alg:pttk_global}.
Additionally, we refer to the rank in \eqref{eqn:uncompressed_form} as the \textit{uncompressed rank}. Lastly, in the numerical experiments section, we label the \textbf{PTTK-Global} method as \textbf{PTTK-Global-1} when we pass in false for the ``Compress'' input variable for Algorithm ~\ref{alg:pttk_global}. Analogously, we label the method as \textbf{PTTK-Global-2} when we pass in true for the ``Compress'' input variable for Algorithm~\ref{alg:pttk_global}.
\paragraph{Additional Computational Cost}
Along with the computational costs associated with the offline and online stages mentioned in Section \ref{ssec:mainalgo}, we invoke additional computational costs with Algorithm~\ref{alg:pttk_global}, which we will now discuss. During the offline stage, we must now form the block matrix composed of matrices $\mat{S}$ and $\mat{T}$ and then take its QR decomposition; this step has a computational cost of $\mc{O}(N_sr^2)$ flops. For the online stage, forming the matrix $\mathat{H}$ then subsequently taking the eigendecomposition of $\mat{R}\mathat{H}\mat{R}^{\top}$ and editing the eigenvalues of $\mat{\Lambda}$ along with discarding the corresponding eigenvectors of $\mat{U}$ has a computational cost of $\mc{O}(d_\theta n r^2 + d_\theta r^3)$ flops. For {PTTK-Global-1}, forming the matrix $\mat{U}\mat{\Lambda}\mat{U}^{\top}$ has a computational cost of $\mc{O}(r^3)$ flops. For {PTTK-Global-2}, further zeroing out the eigenvalues of $\mat{\Lambda}$ and discarding the corresponding eigenvalues of $\mat{U}$, then forming the matrix $\mat{Q}\mathat{U}$ has a computational cost of $\mc{O}(N_sr^2)$ flops. To summarize, for {PTTK-Global-1} and PTTK-Global-2, the offline stage has the computational cost associated with Algorithm~\ref{alg:offline} and an additional  $\mc{O}(N_sr^2)$ flops, and for the online stage there is a cost of $\mc{O}(d_\theta r^3 + d_\theta n r^2)$  flops for {PTTK-Global-1} and $\mc{O}(d_\theta r^3 + d_\theta n r^2 +N_sr^2)$  flops for {PTTK-Global-2}.

\section{Numerical Experiments}
All numerical experiments were run on the following computer: OS: Ubuntu 22.04, CPU: Intel Core i7-7600U @ 4x 3.9GHz, RAM: 16GB, SWAP: 16GB.
We utilize our PTTK and TTK (see Section \ref{ssec:TTK_Method}) low-rank approximation methods to approximate various parametric and non-parametric kernels, respectively. Table \ref{tab:kern-table} enumerates the kernels utilized for subsequent low-rank approximation. Denote the norm by \( \|\cdot\|_p \), where \( p \in \{2,F\} \) is a parameter representing the type of norm used (either spectral or Frobenius). The relative error is then computed as:
\[ \textrm{relative error}\equiv  \frac{\| \mathbf{K}(\mathcal{X},\mathcal{Y};{\vec{\theta}}) - \hat{\mathbf{K}}(\mathcal{X},\mathcal{Y};{\vec{\theta}}) \|_p}{\| \mathbf{K}(\mathcal{X},\mathcal{Y};{\vec{\theta}}) \|_p}. \]
When the matrices are too large to be formed explicitly, we utilize random uniform sampling of points from both $\mathcal{X}$ and $\mathcal{Y}$ to construct the sampled source and target points, denoted by $\mathcal{\hat{X}}$ and $\mathcal{\hat{Y}}$, respectively. Then, we compute the relative error for the subsampled kernel matrix, which is expressed as:
\[ \textrm{subsampled relative error} \equiv  \frac{\| \mathbf{K}(\mathcal{\hat{X}},\mathcal{\hat{Y}}; \vec \theta ) - \hat{\mathbf{K}}(\mathcal{\hat{X}},\mathcal{\hat{Y}}; \vec \theta ) \|_p}{\| \mathbf{K}(\mathcal{\hat{X}},\mathcal{\hat{Y}}; \vec \theta ) \|_p}. \]
Lastly, we refer to the \textit{compression rank} of a low-rank kernel matrix approximation method as the target rank given by that method. The target rank can either be user supplied or computed by that method based on a user supplied error tolerance.

\begin{table}[!ht]
    \centering
    \begin{tabular}{l|l}
        \textbf{Name} & \textbf{Kernel Function} \\
        \hline
        Bi-harmonic & $r^{-2}$ \\
        Laplace-3D & $r^{-1}$ \\
        Laplace-2D & $-\log(r)$ \\
        Thin-plate & $r^2\log(r)$ \\
        Thin-plate spline & $\frac{r^2}{\ell^2} \log(\frac{r^2}{\ell^2} )$ \\
        Squared-Exponential (SE) & $\exp\left(-\left(\frac{r}{\ell}\right)^2\right)$ \\
        Multiquadric & $(1 + \left(\frac{r}{\ell}\right)^2)^{1/2}$ \\
        Exponential & $\exp\left(-\frac{r}{\ell}\right)$ \\ 
        Mat\'ern & $\frac{2^{ (1-\nu)}}{\Gamma(\ell)}\left(\sqrt{2\nu}\frac{r}{\ell}\right)^{\nu} B_{\nu}\left(\sqrt{2\nu}\frac{r}{\ell}\right)$ 
    \end{tabular}
    \caption{Kernel functions of the form  $\kappa(\vec{x},\vec{y}; \vec \theta )$,  for two different types of parameterization $\B\theta = (\ell, \nu)$ and $\B\theta = (\ell)$. The vectors $\vec{x} \in \mathcal{X}$ and $\vec{y} \in \mathcal{Y}$ with the pairwise distance $r = \| \vec{x} - \vec{y}\|_{2}$ and $B_{\nu}$ is the modified Bessel function of the second kind.}
        \label{tab:kern-table}

\end{table}
We select kernel functions from different application domains to demonstrate the versatility of our PTTK method. The kernels Laplace-2D, Laplace-3D, and biharmonic appear in applications related to PDEs and integral equations; SE, multiquadric, Thin-plate, and Thin-plate spline appear in radial basis function interpolation; SE and Exponential are both special cases of Mat\'ern and appear in Gaussian processes. Note that all the chosen kernels are symmetric and isotropic. The listed kernels can have varying smoothness; in particular, many of them exhibit singularity near the origin, see Section \ref{sssec:error_disc}. In the numerical experiments, we choose $\mc{B}_{s}$ and $\mc{B}_{t}$ so that the parametric kernel matrices admit low-rank approximations based on the  smoothness of the chosen parametric kernel function. The numerical experiments in this section are implemented in Python; the code to reproduce the tables is located at \url{https://github.com/awkhan3/ParametricTensorTrainKernel}. 
\subsection{Non-parametric Kernels}
\label{ssec:TTK_Method}
Before we consider the parametric kernels, we present a simple extension to non-parametric kernels to better understand the accuracy. We call this method, the TTK (Tensor Train Kernel) method.
\paragraph{TTK method} The TTK method can be expressed as a special case of the 
PTTK method where $\mat{H}(\vec{\theta}) = \mat{I}_{r_d}$ is the identity matrix of size $r_d \times r_d$ and $d_\theta = 0$; thus, the online stage (Algorithm \ref{alg:online}) is not needed, and the low-rank approximation of $\mat{K}(\mathcal{X}, \mathcal{Y})$ can simply be expressed in terms of the matrices  $\mat{S}$ and $\mat{T}$ obtained from the offline stage (Algorithm \ref{alg:offline}), resulting in  a low-rank factorization of $\mat{K}(\mathcal{X},\mathcal{Y})$: 
$$\mat{K}(\mathcal{X},\mathcal{Y}) \approx \mat{S}\mat{T}\t.$$
For this experiment, we fix the dimension to be $d = 3$ and take $n = 27$ Chebyshev nodes in each spatial dimension. Other settings are explained below:
\begin{enumerate}
    \item \textbf{Boxes}: We define the source box $\mathcal{B}_{s} = [0, 1]^{3}$ and the target box $\mathcal{B}_{t} = [2, 3]^{3}$ such that $\mathcal{X} \subseteq \mathcal{B}_{s}$ and $\mathcal{Y} \subseteq \mathcal{B}_{t}$. We take $N_s = N_t = 10^4$ points for the source and target points in a uniform random manner within the boxes $\mathcal{B}_{s}$ and $\mathcal{B}_{t}$, respectively. 
    \item \textbf{Kernels considered}: We consider all the kernels listed in Table~\ref{tab:kern-table}. For all the kernels, we fix the correlation length $\ell = 1$, and for the Mat\'ern kernel, we fix the smoothness parameter $\nu$ to $\frac{3}{2}$ and $\frac{5}{2}$ which we designate as Mat\'ern-3/2 and Mat\'ern-5/2, respectively.
\end{enumerate} For all of these experiments, we fix the tolerance given to Algorithm~\ref{alg:offline}
 to be \num{1e-9}.
To compute the \textbf{SVD-Error}, we compute the thin SVD of the kernel and truncate the rank at the compression rank (designated by the column \textbf{rank}) obtained by TTK. This ensures that both the SVD and TTK have the exact storage cost. The results are reported in Table~\ref{tab:TTK-table}. 
\begin{table}[!ht]
\small\centering
\begin{tabular}{c|c|c|c|c|c}\hline
\textbf{Kernel} & \textbf{rank} & \begin{tabular}{@{}l@{}}
                   \textbf{TT}\\
                   \textbf{Error}\\
                 \end{tabular}& \begin{tabular}{@{}l@{}}
                   \textbf{SVD}\\
                   \textbf{Error}\\
                 \end{tabular} & \begin{tabular}{@{}l@{}}
                   \textbf{TT}\\
                   \textbf{Time (s)}\\
                 \end{tabular} & \begin{tabular}{@{}l@{}}
                   \textbf{SVD}\\
                   \textbf{Time (s)}\\
                 \end{tabular} \\
\hline
Exponential & 58 & \num{5.07e-10} & \num{4.27e-11} & 2.86  & 331  \\
Thin-plate & 43 & \num{3.85e-10} & \num{4.64e-11} & 2.89  & 332  \\
Bi-harmonic & 65 & \num{3.29e-10} & \num{7.85e-11} & 3.26  & 332  \\
Multiquadric & 42 & \num{1.73e-10} & \num{4.78e-11} & 3.08  & 332  \\
Thin-plate-spline & 43 & \num{3.25e-10} & \num{4.64e-11} & 2.95  & 331  \\
Laplace-2D & 46 & \num{7.65e-11} & \num{3.25e-11} & 3.07  & 331  \\
Laplace-3D & 43 & \num{9.93e-11} & \num{3.58e-11} & 3.00   & 332  \\
Mat\'ern-3/2 & 62 & \num{1.03e-9} & \num{5.57e-11} & 3.17  & 332  \\
Mat\'ern-5/2 & 65 & \num{1.04e-9} & \num{6.62e-11} & 3.22  & 331  \\
SE & 91 & \num{1.41e-08} & \num{2.97e-10} & 3.35  & 331  
\end{tabular}
\caption{TTK applied to various kernels defined in Table \ref{tab:kern-table}, with a tolerance fixed at \num{1e-9}. The columns \textbf{TT-Error} and \textbf{SVD-Error} represent the relative error in the 2-norm. }
\label{tab:TTK-table}
\end{table}
It can be observed via Table \ref{tab:TTK-table} that the 2-norm error (\textbf{TT-Error}) obtained by TTK exhibits a close correspondence to the optimal 2-norm error given by SVD (\textbf{SVD-Error}) for the same compression rank (\textbf{rank}) given by TTK. Furthermore, the 2-norm error obtained by TTK is also below or within a factor of ${10}$ of the given tolerance $1 \times 10^{-9}$. The one exception is the SE kernel; this is due to the fact that the conditioning of the cross intersection matrices of the DMRG super cores of the coefficient tensor $\ten{M}$ induced by the SE kernel is sensitive to the distance between the source and target points for a fixed $\ell$. At the same time, TTK is much more efficient (see the column \textbf{TT-time}) compared to the run time of SVD (\textbf{SVD-time}) by a factor of around $98\times$ . It should be noted that TTK is slightly slower compared to another method, such as ACA, which also has a linear computational cost dependence on the number of source and target points, as can be seen from the next set of experiments. However, TTK can be advantageous when the source and target points change in time, but the boxes $\mc{B}_s$ and $\mc{B}_t$ do not change. This scenario is explained further in~\cite[Section 5.2, Experiment 5]{Saibaba2022}. 
\subsection{Parametric Kernels}
For this experiment, we take the dimension $d = 3$ and $n = 32$ Chebyshev nodes in both the spatial and parameter dimensions. Other problem settings are as follows:
\begin{enumerate}
    \item \textbf{Kernels}: We consider  the SE, Thin-plate spline, and Multiquadric kernels with one length scale parameter $\boldsymbol\theta = (\ell)$ and the Mat\'ern kernel with two parameters for length scale $\ell$ and $\nu$  (i.e., $\boldsymbol{\theta} = (\ell, \nu)$).
    \item \textbf{Boxes}: We define $D_b$ to be the distance between the bottom left corners of the boxes $\mathcal{B}_{s}$ and $\mathcal{B}_{t}$.   We fix $\mathcal{B}_{s} = [0,1]^{3}$ and we consider 
$\mathcal{B}_{t} = [1, 2]^{3}$, so that the boxes  $\mathcal{B}_{s}$, $\mathcal{B}_{t}$ are weakly admissible. 
   We take $N_s = N_t = 5000$ points from $\mathcal{B}_{s}$ and $\mathcal{B}_{t}$, uniformly at random, to synthetically construct the source and target points $\mathcal{X}$ and $\mathcal{Y}$.
    \item \textbf{Parameters}: We define the parameter space as follows: for the Squared-Exponential, Thin-plate spline, and Multiquadric kernels. The parameter spaces $\Theta_{1}$ is set to $[\frac{D_b}{2}, D_b]$, while for the Mat\'ern kernel, $\Theta_{2}$ is set to $[\frac{D_b}{2}, D_b] \times [\frac{1}{2}, 3]$. To compute the error, we randomly sample $300$ points uniformly from these parameter spaces.
   
\end{enumerate}
In Table~\ref{tab:PTTK-table} we report the time for the offline computations, offline storage costs, online time, and relative error in the Frobenius norm. We consider the four different parametric kernels described above with tolerances $10^{-4}, 10^{-6}, 10^{-8}$ for the Algorithm~\ref{alg:offline}. We observe that the error in the computed approximations is within a factor of $10$ compared to the requested tolerance. We also note that the storage and the offline computational time increase with the decrease in tolerance (i.e., higher accuracy). Furthermore, the storage of Mat\'ern is higher than that of the other kernels because it has one additional parameter dimension (for the Mat\'ern kernel, $D=8$ but for the other kernels $D=7$). We draw attention to the fact that the cost of the online stage of the PTTK method (Algorithm \ref{alg:online}) does not depend on $N_s$ nor $N_t$; hence, the online time does not show significant changes for an increasing number of source and target points.
\begin{table}[!ht]
\small 
\centering
\begin{tabular}{c|c|c|c|c|c}
\hline
\textbf{Kernel}              & \textbf{Tol}   & \begin{tabular}{@{}l@{}}
                   \textbf{Offline}\\
                   \textbf{Time (s)}\\
                 \end{tabular}     & \begin{tabular}{@{}l@{}}
                   \textbf{Storage}\\
                   \textbf{(MB)}\\
                 \end{tabular}  &  \begin{tabular}{@{}l@{}}
                   \textbf{Online}\\
                   \textbf{Time (s)}\\
                 \end{tabular}  & \textbf{Error}       \\ \hline
Mat\'ern             & \num{1.00e-4} & \num{12.2}    & \num{3.02e0}   & \num{3.20e-3} & \num{1.84e-4} \\ 
Mat\'ern             & \num{1.00e-6} & \num{50.8}   & \num{12.4}   & \num{3.88e-3} & \num{1.86e-6} \\ 
Mat\'ern             & \num{1.00e-8} & \num{337}   & \num{43.7}   & \num{7.06e-3} & \num{1.84e-8} \\ \hline
SE & \num{1.00e-4} & \num{2.70e0}    & \num{3.16e0}   & \num{1.63e-3} & \num{4.42e-4} \\ 
SE & \num{1.00e-6} & \num{4.26e0}   & \num{8.04e0}   & \num{1.78e-3} & \num{4.50e-6} \\ 
SE & \num{1.00e-8} & \num{7.41e0}   & \num{16.7}   & \num{2.27e-3} & \num{4.54e-8} \\ \hline
Multiquadric      & \num{1.00e-4} & \num{2.40e0}    & \num{.997} & \num{1.59e-3} & \num{7.21e-5} \\ 
Multiquadric      & \num{1.00e-6} & \num{2.99e0}   & \num{3.11e0}   & \num{1.63e-3} & \num{7.52e-7} \\ 
Multiquadric      & \num{1.00e-8} & \num{4.55e0}   & \num{7.20e0}   & \num{1.75e-3} & \num{8.80e-9} \\ \hline
Thin-plate-spline  & \num{1.00e-4} & \num{2.54e0}    & \num{1.52e0}   & \num{1.60e-3} & \num{1.59e-3} \\ 
Thin-plate-spline  & \num{1.00e-6} & \num{3.31e0}   & \num{3.81e0}   & \num{1.65e-3} & \num{5.74e-6} \\ 
Thin-plate-spline  & \num{1.00e-8} & \num{6.49e0}   & \num{9.08e0}   & \num{1.82e-3} & \num{6.69e-8} \\ 
\end{tabular}
\caption{ PTTK applied to various kernels with different tolerance values. The source box and target box are weakly admissible. The column \textbf{Online Time} is averaged over $300$ parameter samples, while the column \textbf{Error} reports the maximum relative error in the Frobenius norm over the same $300$ samples.}
\label{tab:PTTK-table}
\end{table}
\begin{table}[!ht]
\small
\centering
\begin{tabular}{c|c|c|c|c}
\hline
\textbf{Kernel} & \textbf{Tol} & \textbf{Online Time (s)} & \textbf{Speed Up} & \textbf{Error} \\ \hline
Mat\'ern             & \num{1.00e-4} & \num{.265}   & \num{82.7e0} & \num{1.39e-3} \\ 
Mat\'ern             & \num{1.00e-6} & \num{.671}   & \num{173} & \num{2.02e-5} \\ 
Mat\'ern             & \num{1.00e-8} & \num{1.43e0}    & \num{202} & \num{1.18e-7} \\ \hline
SE & \num{1.00e-4} & \num{2.42e-2}   & \num{14.9} & \num{7.25e-4} \\ 
SE & \num{1.00e-6} & \num{5.60e-2}   & \num{31.4} & \num{4.18e-6} \\ 
SE & \num{1.00e-8} & \num{.114}   & \num{50.1} & \num{4.32e-8} \\ \hline
Multiquadric     & \num{1.00e-4} & \num{7.91e-3}   & \num{4.96e0} & \num{8.10e-4} \\ 
Multiquadric      & \num{1.00e-6} & \num{2.15e-2}   & \num{13.2e0} & \num{4.84e-6} \\ 
Multiquadric      & \num{1.00e-8} & \num{5.14e-2}   & \num{29.4e0} & \num{4.90e-8} \\ \hline
Thin-plate-spline  & \num{1.00e-4} & \num{2.09e-2}   & \num{13.1e0} & \num{8.51e-4} \\ 
Thin-plate-spline  & \num{1.00e-6} & \num{5.69e-2}   & \num{34.4e0} & \num{4.07e-5} \\ \
Thin-plate-spline  & \num{1.00e-8} & \num{.158}   & \num{86.6e0} & \num{9.81e-8} \\ 
\end{tabular}
\caption{ ACA applied to various kernels with different tolerance values, such that the source box and target box are weakly admissible. The column \textbf{Online Time} is averaged over 300 samples, while the column \textbf{Error} is the maximum relative error in the Frobenius norm over 300 samples. The \textbf{Speedup} column represents the relative speedup compared of PTTK method compared to the ACA method.}
\label{tab:ACA-table}
\end{table}
\paragraph{Comparison with ACA} In Table~\ref{tab:ACA-table}, we compare the online time for PTTK with another method, ACA (Algorithm 1 in \cite{liu2020parallel}), which also has linear complexity in the number of source and target points. The tolerance here refers to the tolerance used for ACA (Frobenius norm) and Algorithm~\ref{alg:offline}.   Table \ref{tab:ACA-table} indicates a close correspondence in the maximum Frobenius norm error (\textbf{Error}) for the ACA methods, typically within a factor of $10$. Therefore, the comparison to PTTK is fair, even though the tolerances have different meanings in both contexts. However, the PTTK method significantly outperforms ACA in terms of online time (\textbf{Online Time}), as illustrated by the \textbf{Speedup} column in Table \ref{tab:ACA-table} with speedups ranging from $4\times$ to $202\times$. In particular, the speedup for the Mat\'ern kernel is quite drastic, since the kernel evaluations for this kernel is relatively expensive. The storage cost of PTTK (\textbf{Storage}) is notably low, typically much less than $45$ MB, as evident from Table \ref{tab:PTTK-table}. These observations emphasize the benefits of our method, which achieves lower online times with minimal storage overhead and minimal offline computation time and, at the same time, maintains accuracy comparable to ACA, which has a similar asymptotic computational complexity. A comparison to another parametric low-rank approximation is given in \cref{sec:tucker}.

\subsection{Global low-rank approximation}
In this set of experiments, we consider the performance of the PTTK-Global method. We take the dimension $d = 3$ and $n=27$ Chebyshev nodes in both the spatial and parameter dimensions. Other problem settings are as follows:
\subsubsection{Synthetic}
We consider the case where the source and target boxes are the same. Other relevant problem settings are described below:
\begin{enumerate}
    \item \textbf{Kernels}: We consider both the Squared-Exponential (SE) and the Multiquadric kernels with one length scale parameter $\vec{\theta} = (\ell)$.
    \item \textbf{Boxes}: We define the source and target boxes to be $\mathcal{B}_{s} = \mc{B}_t= [0, 1]^{3}$. We now define $D_b =\sqrt{3}$ which is the length of the diagonal of $\mathcal{B}_{s}$. We take $N_s = N_t = 10^5$ points from $\mathcal{B}_{s}$, uniformly at random, in order to synthetically construct the source and target points $\mathcal{X}$ and $\mathcal{Y}$.
    \item \textbf{Parameters}: We fix the parameter space to be $\Theta = [.2D_b, D_b]$ for either kernel. 
    \item \textbf{Error}: We report the subsampled relative error in the Frobenius norm averaged over $300$ parameter samples with $|\mathcal{\hat{X}}| = 500$. The set $\mathcal{\hat{X}}$ is constructed by taking the points uniformly at random from $\mathcal{X}$.
\end{enumerate}
The two kernels are chosen because the Squared-Exponential (SE) and Multiquadric kernels are sufficiently smooth, and the SE kernel induces a positive semi-definite matrix while the Multiquadric kernel does not. Our PTTK-Global method can be applied to indefinite kernels by removing line 7 in Algorithm \ref{alg:pttk_global}. We also note that it has been observed in \cite{Cai} that low-rank approximation schemes employing the Nystr\"om method, with k-means and uniform sampling, face difficulties when handling kernels lacking positive definiteness (e.g., Multiquadric) while our methods do not seem to face such difficulties.
\begin{table}[!ht]
\small 
\centering
\begin{tabular}{c|c|c|c|c|c} \hline 
\textbf{Kernel} & \begin{tabular}{@{}l@{}}
                   \textbf{Offline}\\
                   \textbf{Time (s)}\\
                 \end{tabular}  & \textbf{{Error}} & \textbf{{Storage} (MB)} & \begin{tabular}{@{}l@{}}
                   \textbf{Online}\\
                   \textbf{Time (s)}\\
                 \end{tabular} & \textbf{{Rank}} \\ \hline
\multicolumn{6}{c}{\textbf{PTTK-Global-1}} \\ \hline 
 SE & \num{115} & \num{3.72e-6} & \num{485} & \num{5.26e-2} & \num{580} \\ \hline
 Multiquadric & \num{104} & \num{2.55e-6} & \num{375} & \num{3.33e-2} & \num{453} \\
\hline
\multicolumn{6}{c}{\textbf{PTTK-Global-2}} \\ \hline
        SE & \num{115} & \num{8.77e-6} & \num{485} & \num{0.238} & \num{62.5} \\ \hline
        Multiquadric & \num{104} & \num{8.57e-6} & \num{375} & \num{0.166} & \num{51.9} \\
 
\end{tabular}
\caption{PTTK-Global applied to the SE and Multiquadric kernels. The columns \textbf{Error} is the mean subsampled relative error measured in the Frobenius norm over the $300$ samples in sample space $\Theta$, \textbf{Online Time} is the mean online time over the sample space, and  \textbf{Rank} is the mean compression rank over the sample space. }\label{tab:global_synthetic}
\end{table}

For all experiments in Table \ref{tab:global_synthetic}, the tolerance given to Algorithm \ref{alg:greedy} is fixed to be $\epsilon_{\rm tol}=10^{-5}$ during the Offline Stage of Algorithm \ref{alg:offline}. In Table \ref{tab:global_synthetic}, the PTTK-Global-1 and PTTK-Global-2 methods achieve a mean relative error that corresponds closely with our desired tolerance given to the TT-cross approximation within a factor of $10$. With Table \ref{tab:global_synthetic}, the trade-off between compression and online time is clear; PTTK-Global-1 is faster than PTTK-Global-2 but has a higher final rank. The average relative error of PTTK-Global-1 is lower than PTTK-Global-2 due to the additional compression step in PTTK-Global-2 that increases the error.

\subsubsection{Real-world data set}
In this section, we compare our proposed methods, PTTK-Global-1 and PTTK-Global-2, to several alternative low-rank approximation methods designed for symmetric positive semi-definite matrices. These methods include RP-Cholesky, introduced in \cite{RPCholesky}, and Nystr\"om's method with uniform sampling, as implemented in \cite{scikit-learn}. Both methods have linear complexity in the number of source and target points. Additionally,  note that RP-Cholesky and Nystr\"om's method can also be applied to problems in higher spatial dimensions compared to our method.

This experiment is motivated by an application to Gaussian Process (GP); however, in this experiment we are solely focused on the low-rank approximation and not the inference or hyperparameter estimation. We are provided with daily precipitation data from $5500$ US weather stations in $2010$, denoted by dataset $\mat{E}$. The dataset $\mat{E}$ is a matrix of size \(628,474 \times 3\), where each row represents a unique observation in two spatial dimensions and a time dimension (i.e., with $d=3$). We now define the source points to be $\mathcal{X} = \{\vec{x}_1, \dots, \vec{x}_{N_s}\}$ where $\vec{x}_{i} $ is the $i$th row of $\mat{E}$ such that the points in $\mathcal{X}$ are normalized (zero mean and unit variance) and $N_s = 628,474$. Furthermore, we take $\mathcal{X} = \mathcal{Y}$ so the kernel matrix is symmetric positive definite. For the setup, we take the spatial dimension to be $d = 3$ and the parameter dimension to be $d_\theta = 1$; subsequently, $n = 32$ Chebyshev points are taken in both the spatial and parameter dimension. The additional problem settings are outlined as follows: 
\begin{enumerate}
    \item \textbf{Kernel}: We consider the positive definite SE kernel with one length scale parameter $\vec{\theta}  = (\ell)$. We define the parameter space as $\Theta = [.4\rho, \rho]$ where $\rho$ is defined to be the radius of the set $\mathcal{X}$; as in,  $\rho = \max_{x \in {\mathcal{X}}}\| \vec{x}\|_{2}$. 
        \item \textbf{Box}: We take $\mc{B}_s = \mc{B}_t$ to be a bounding box containing the points after normalization.
    \item \textbf{Error}: We compute the subsampled relative error with $|\mathcal{\hat{X}}| = 500$. Again, to compute the error, we randomly sample 300 points uniformly from the parameter space $\Theta$.
\end{enumerate}

\begin{table}[!ht]
    \small 
    \centering
    \begin{tabular}{c|c|c|c|c|c} \hline 
        \textbf{Method} & \begin{tabular}{@{}l@{}}
                   \textbf{Storage}\\
                   \textbf{(MB)}\\
                 \end{tabular}  &   \begin{tabular}{@{}l@{}}
                   \textbf{Offline}\\
                   \textbf{Time (s)}\\
                 \end{tabular}  & \begin{tabular}{@{}l@{}}
                   \textbf{Error}\\
                   \textbf{(Mean)}\\
                 \end{tabular} & \begin{tabular}{@{}l@{}}
                   \textbf{Error}\\
                   \textbf{(Max)}\\
                 \end{tabular} & \begin{tabular}{@{}l@{}}
                   \textbf{Online}\\
                   \textbf{Time (s)}\\
                 \end{tabular} \\
        \hline 
        \begin{tabular}{@{}l@{}}
                   PTTK\\
                   Global-1\\
                 \end{tabular}  & \num{2369} & \num{596} & \num{4.28e-6} & \num{3.18e-5} & \num{3.51e-2} \\ \hline
          \begin{tabular}{@{}l@{}}
                   PTTK\\
                   Global-2\\
                 \end{tabular} & \num{2369} & \num{596} & \num{9.96e-6} & \num{3.37e-5} & \num{1.30} \\ \hline
        RP-Cholesky & $\sim$  & $\sim$ & \num{6.47e-5} & \num{1.31e-4} & \num{22.1} \\ \hline
        Nystr\"om & $\sim$ & $\sim$ & \num{3.63e-4} & \num{2.34e-3} & \num{1.07} \\
    \end{tabular}
    \caption{Comparison of PTTK-Global to various low-rank approximation methods for the SE kernel. The columns \textbf{Error} (mean) and \textbf{Error} (max) report the mean/max subsampled relative error taken over the $300$ sample points in $\Theta$. Note that the columns \textbf{Storage} and \textbf{Offline Time} do not apply to RP-Cholesky and Nystr\"om since they do not require pre-computation. } 
\label{tab:GP}
\end{table}
In all trials listed within Table \ref{tab:GP}, the tolerance given to Algorithm~\ref{alg:greedy} is again fixed to be $\epsilon_{\rm tol} = 10^{-5}$, which is called during the Offline Stage in Algorithm~\ref{alg:offline}. The entries in Table \ref{tab:GP} encompass storage capacity, offline time, the mean and maximum errors computed across 300 samples in $\Theta$ utilizing the Frobenius norm, and online time. In each of the 300 trials, the compression rank denoted as $r_{c}$, provided by PTTK-Global-2, is next utilized by RP-Cholesky and Nystr\"om to derive a rank $r_{c}$ approximation of the parametric kernel matrix. This ensures we are comparing the accuracy of the methods for a fixed storage cost. The mean compression rank, averaged over $300$ samples, obtained from PTTK-Global-2 was $91$ and with PTTK-Global-1 was $468$. For both methods, we see that the mean and max approximation error is within a factor of 10 of the given tolerance. 
\paragraph{Comparison with other methods}
Our PTTK-Global-2 method showcases notable advantages over RP-Cholesky and Nystr\"om's method in terms of accuracy, time, or both, although with increased storage requirements and pre-computation time for the same rank. It is over $20\times$ faster than RP-Cholesky and offers $6\times$ better accuracy according to the \textbf{Error} (mean) column. In comparison to Nystr\"om,  PTTK-Global-2 has the same online time, but it provides at least a factor $36\times$ improvement in accuracy based on the same column. The column designated as \textbf{Error} (max) highlights that the maximum error of PTTK-Global-1, PTTK-Global-2 is within a factor of $4$ relative to the prescribed tolerance ($ 10^{-5}$); in contrast, the other methods are contained within a factor exceeding $10$. Lastly, the table indicates that PTTK-Global-1, is at least $30 \times$ faster than all the other methods and at least $600 \times$  faster than RP-Cholesky, in particular. This comes with the caveat that PTTK-Global-1 has a higher compression rank of $468$ compared to the mean compression rank of $91$ for the other methods.

\section{Conclusions}
In this work, we proposed a low-rank black box approximation scheme of parametric kernel matrices via multivariate polynomial approximation and the tensor train decomposition. Our method can be applied to kernel matrices induced by various kernels and varying spatial configurations of source and target points ($\mc{X} = \mc{Y}$ or $\mc{X} \ne \mc{Y}$), if the kernel function is sufficiently smooth over the chosen parameter space.
After an offline stage, our method outperforms existing methods with an efficient online phase that does not depend on $N_s$ nor $N_t$. Our method works well for three spatial and low parameter dimensions; an investigation of the extension to higher dimensions is left for future work. For future work, we can also consider the TT-matrix form of the coefficient tensor ($\ten{M}$), similar to \cite{könig2023efficient} for the SE kernel.   An alternative direction for future work would be applying our method in the context of hierarchical matrices by using our method as a sub-routine to compute hierarchical parametric low-rank approximations.

\appendix
\section{TT-cross Approximation} \label{sec:tt_cross_approx}
We aim to obtain an approximate tensor train decomposition  of $\ten{X} \in \R^{n_1 \times \dots n_N}$ without explicitly forming $\ten{X}$ first. To this end, we introduce the TT-cross Approximation method, first proposed in \cite{oseledets2010ttcross}, which serves as a multidimensional analog to matrix cross approximation. Our description is heavily influenced by~\cite[Section 2.3]{strossner2022approximation}. 

\paragraph{Cross Approximation} 
We now describe the Cross Approximation method for an arbitrary matrix $\mat{A} \in \R^{m \times n}$. We first suppose that $\mat{A}$ is exactly rank $r_s$, then for the correct choice of the index sets $\mc{I}, \mc{J}$ there exists a skeleton decomposition of $\mat{A}$ where 
$$\mat{A} = \mat{A}(:, \mc{J})[\mat{A}(\mc{I},\mc{J})]^{-1}\mat{A}(\mc{I}, :) , $$
where $\mc{I}, \mc{J}$ are ordered index sets such that the two matrices $\mat{A}(:, \mc{J}) \in \R^{m \times r_s}$ and $ \mat{A}(\mc{I}, :) \in \R^{r_s \times n}$ contain columns and rows of $\mat{A}$ corresponding to the index sets $\mc{J}$ and $\mc{I}$ respectively. Similarly, $\mat{A}(\mc{I},\mc{J})$ is the submatrix of $\mat{A}$. Now, the task is to find an appropriate choice of $\mc{I}$ and $\mc{J}$ when $\mat{A}$ is approximately rank $r_s$, a good choice is index sets $\mc{I}, \mc{J}$ such that $\mat{A}(\mc{I}, \mc{J})$ is the sub-matrix of maximum volume in $\mat{A}$ as stated in \cite{Goreinov1997, Goreinov2001}. However, finding a sub-matrix of maximum volume is NP-hard, as mentioned in \cite{Civril2009}. Thus, in order to compute $\mc{I}$ and $\mc{J}$, we use a heuristic, specifically Algorithm 2 in \cite{DOLGOV2020106869} to incrementally construct the index sets that define the cross approximation.  In our implementation, we modified this slightly and present this as Algorithm~\ref{alg:mat_cross}.

\begin{algorithm}[!ht]
\caption{Update Cross Approximation}\label{alg:mat_cross}
\begin{algorithmic}[1]
\Require{ Matrix $\mat{A} \in \R^{n \times m}$  with access to its entries  and previous index sets $\mc{I}$ and $\mc{J}$}
\State Generate $\max\{n, m\}$ random samples of $\{1, \dots, n\} \times \{1, \dots, m\}$ without replacement, as $S$
\State Set residual matrix $\mat{R} = \mat{A}(:, \mc{I})\mat{A}(\mc{I}, \mc{J})^{-1} \mat{A}(\mc{J}, :) - \mat{A}$
\State Set  $(l, t) \gets \textrm{arg-max}_{(i, j) \in S}|[\mat{R}]_{i, j}|$
\State Draw a random bit $r_b$ 
\If{$r_b = 1$} 
\State $ t \gets \textrm{arg-max}_{1 \le j \le m}|[\mat{R}]_{l, j}|$
\Else 
\State $l \gets \textrm{arg-max}_{1 \le i \le n}|[\mat{R}]_{i, t}|$
\EndIf
\State $\mc{I} = \mc{I} \cup \{l \}, \mc{J} = \mc{J} \cup \{t\}$
\State \Return $\mc{I}, \mc{J}$
\end{algorithmic}
\end{algorithm}
\paragraph{Nested index sets}
We define two index sets $\mc{I}^{\le k}$ and $\mc{I}^{> k}$ that satisfy 
\begin{align}
\mathcal{I}^{\le k} \subset  & \> \{(i_1,i_2,\dots,i_k): 1 \le  i_j \le n_j , 1 \le j \le k\}, \\ \mathcal{I}^{> k} \subset  & \>\{(i_{k+1},\dots,i_{N}): 1 \le i_j \le n_j ,  k \le j \le N  \}     \end{align}
for $1 \le k \le N-1$, and then we define the corresponding border index sets  $\mathcal{I}^{\le 0} = \mathcal{I}^{> N} = \emptyset$. 
As in the two-dimensional (matrix) case, the index sets are also given a consistent ordering, determined by the TT greedy-cross algorithm; hence, we denote the ordered multi-indices of the index sets with $\mc{I}^{\le k}_{s}$ and $\mc{I}^{ > k}_{t}$ where $1 \le s \le |\mc{I}^{\le k}|, 1 \le t \le |\mc{I}^{ > k}|$. We also require the index sets $\mathcal{I}^{\le 1}, \dots,\mathcal{I}^{\le N-1}$ and $\mathcal{I}^{> 1}, \dots, \mathcal{I}^{ > N -1}$ to be nested, which means for $1 \le k \le N-1$, the following holds
\begin{align*}
    (i_1,i_2,\dots,i_k) \in \mathcal{I}^{\le k} \implies (i_1,i_2,\dots,i_{k-1}) \in \mathcal{I}^{\le k-1}, \\ 
    ( i_{k+1}, \dots,i_{N}) \in \mathcal{I}^{ > k} \implies (i_{k+2}, i_{k+3}, \dots, i_{N}) \in \mathcal{I}^{> k +1}.
\end{align*}
With the index sets, we can define the cross intersection matrix $\mat{X}(\mathcal{I}^{\le k}, \mathcal{I}^{> k }) \in \mathbb{R}^{|\mc{I}^{\le k}| \times |\mc{I}^{ > k}|}$
:$$
[\mat{X}(\mc{I}^{\le k}, \mc{I}^{ > k})]_{i, j} = [\mat{X}^{\{k\}}]_{\overline{\mc{I}^{\le k}_{i}}, \overline{\mc{I}^{> k}_{j}}} , \quad  1 \le i, j \le r_k.
$$

\paragraph{Overview of TT greedy-cross} We present an overview of the TT greedy-cross algorithm introduced in \cite{savo}, a specific TT-cross Approximation method. The TT greedy-cross algorithm must be initialized with index sets and in our implementation, it is initialized with index sets of cardinality $2$; for details, see Appendix~\ref{sec:greedyinit}. The TT-ranks of the TT-cross interpolation constructed by the TT greedy-cross algorithm are related to the cardinalities of the index sets with the relationship 
$$r_k = |\mc{I}^{\le k}| = |\mc{I}^{> k} |, \quad 1 \le k \le N-1.$$

We first define the three-dimensional tensor  $\ten{X}(\mc{I}^{\le k-1}, :, \mc{I}^{> k}) \in \R^{r_{k-1} \times n_k \times r_k}$ for $2 \le k \le N-1$ as follows:
$$[\ten{X}(\mc{I}^{\le k-1}, :, \mc{I}^{> k}) ]_{s, l, t} = x_{\mc{I}^{\le k-1}_{s}, l, \mc{I}^{>k}_{t}}, \quad 1 \le s \le r_{k-1}, 1 \le l \le n_k, 1 \le t \le r_k.$$
The edge case tensors $\ten{X}(\mc{I}^{\le 0}, :, \mc{I}^{> 1}) \in \R^{1 \times n_1 \times r_1}, \ten{X}(\mc{I}^{\le N-1}, n_N, \mc{I}^{> N}) \in \R^{r_{N-1} \times n_N \times 1}$ are defined similarly. To begin, we are given a tolerance $\epsilon > 0$, then the TT greedy-cross algorithm executes a left-to-right sweep, and updates the TT-ranks $r_1,\dots,r_{N-1}$ of the TT-cross approximation of $\ten{X}$ at each sequential iteration $k$ by operating on the reshaped DMRG super cores of size $\mat{X}(\mathcal{I}^{\le k-1}, :, :, \mathcal{I}^{ >  k+1}) \in  \mathbb{R}^{r_{k-1} \times n_k \times n_{k+1} \times r_{k+1}}$. In our implementation, for each $1 \le k \le N-1$, the index sets $\mathcal{I}^{\le k }$, $\mathcal{I}^{> k }$ are incremented by applying Algorithm~\ref{alg:mat_cross} to the reshaped super core $[\mat{X}(\mathcal{I}^{\le k-1}, :, :, \mathcal{I}^{ >  k+1})]^{\{2\}}$  via its entries. Once converged, we use the TT-cross interpolation formula \cite[Equation (7)]{savo} in order to obtain the  TT approximation to $\ten{X}$ constructed via cross approximation:
\begin{equation}\label{eq:int_formula}
\begin{split}
&x_{i_1, i_2, \dots, i_N} \approx  \\ 
&\sum_{s_1,t_1 = 1}^{r_1}\dots \sum^{r_{N-1}}_{s_{N-1},t_{N-1} = 1} \prod^{N}_{k = 1}[\ten{X}(\mathcal{I}_{}^{\le k-1}, :, \mathcal{I}^{ > k  }_{})]_{s_{k-1}, i_k, t_{k}}[\mat{X}(\mathcal{I}^{\le k}, \mathcal{I}^{ > k} ) ]^{-1}_{s_k, t_k},
\end{split}
\end{equation}
where $s_0 = 1$ and $t_N = 1$.  Ideally, we would like the relative error of our TT-cross approximation to be less than or equal to the tolerance $\epsilon$; however, we can only heuristically check for convergence of the TT-cross approximation via sampling. We summarize this discussion in Algorithm~\ref{alg:greedy}. Additionally, the error analysis of TT-cross Approximation has been investigated in the Chebyshev norm \cite{savo, osinsky2019tensor} and Frobenius norm \cite{qin2022error}.

\paragraph{Summary of computational costs} Let us once again assume $n_1 = \dots = n_N = n$ and $r = \underset{i}{\max}~r_i$. As mentioned in Appendix~\ref{sec:greedyinit}, the cost of initializing is $\mc{O}(Nn^2)$ tensor entry evaluations. Furthermore, Algorithm~\ref{alg:greedy} requires $\mc{O}(Nnr^3)$ flops and $\mc{O}(Nnr^2)$ tensor entry evaluations; this is because an application of \eqref{alg:mat_cross} on each DMRG super core requires $\mc{O}(nr^2)$ flops and $\mc{O}(nr)$ tensor entry evaluations, there are $N$ such computations in each sweep, and a maximum of $r$ such sweeps.  Note, if the tensor is defined by entries of the parametric kernel function, then we can replace the term tensor entry evaluations with kernel evaluations.

\begin{algorithm}[!ht]
\caption{TT greedy-cross outline}\label{alg:greedy}
\begin{algorithmic}[1]
\Require 
Order-$N$ tensor $\ten{X} \in \mathbb{R}^{n_1 \times \dots \times n_N}$ with access to its entries. A tolerance $\epsilon > 0$.
\Ensure  The TT approximation, $\ten{\hat{X}} = [\ten{C}_1, \ten{C}_2, \dots, \ten{C}_N]$ 
\State Initialize nested index sets using Algorithm~\ref{alg:greedy_init}
\State Initialize TT-ranks $r_1,\dots,r_{N-1} := 2$ and $r_0, r_N = 1$
\While{The relative error of $\ten{X}$ and $\tenh{X}$ in the Chebyshev norm is greater than $\epsilon$ at sample points}
\For{$k = 1 \dots N - 1$}
\State Apply Algorithm \ref{alg:mat_cross} to the matrix $[\mat{X}(\mathcal{I}^{\le k-1},:, :, \mathcal{I}^{ > k+1})]^{\{2\}}$ with index sets $\mc{I}^{\le k}, \mc{I}^{>k}$, and if applicable update index sets $\mc{I}^{\le k}$ and $\mc{I}^{> k}$.
\EndFor
\EndWhile
\State Use~\eqref{eq:int_formula} to obtain the cores $\ten{C}_{1},\dots,\ten{C}_{N}$ using the index sets $\mathcal{I}^{ \le 1}, \dots, \mathcal{I}^{\le  N-1}$ and $\mathcal{I}^{>  1}, \dots, \mathcal{I}^{>   N-1}$.

\end{algorithmic}
\end{algorithm}

\section{TT greedy-cross initialization}\label{sec:greedyinit} We introduce a heuristic for initializing the index sets that will be used in the TT greedy-cross algorithm since searching over all possible index sets $\mc{I}^{\le k}, \mc{I}^{ > k}$ for $1 \le k \le N-1$ of cardinality two in order to minimize $\max_{1 \le k \le N-1} \textrm{cond}_{2}(\mat{X}(\mc{I}^{\le k}, \mc{I}^{ > k}))$, where $\textrm{cond}_{2}(\cdot)$ is the 2-norm condition number of the matrix, would be computationally infeasible. Numerical simulations indicate that initialization with  index sets of cardinality two, rather than one, is needed to apply TT greedy-cross in our method to the squared-exponential kernel in the non-parametric case; otherwise, the TT greedy-cross algorithm quits before a good solution is found. For the parametric case, it is not needed, but does allow the TT greedy-cross algorithm to converge faster. As for the other kernels listed in Table \ref{tab:kern-table}, one can initialize the greedy-cross algorithm with randomized index sets of cardinality 1 for the problem setup described in the numerical experiments section which would take $\mc{O}(N)$ time.

Our heuristic method begins by initializing the index sets $\mc{I}^{ > k}$ for $1 \le  k \le N-1$ with uniformly randomly chosen indices such that $|\mc{I}^{ > k }| = 2$. We start with $k = 1$ and then construct an index set $\mc{I}^{\le 1}$ that minimizes the 2-norm condition number of the cross intersection matrix  $\mat{X}(\mc{I}^{\le 1}, \mc{I}^{> 1})$ such that $\mc{I}^{ \le 1}_{1} \ne \mc{I}^{ \le 1}_{2}$. Now we are given a index set $\mc{I}^{ \le k}$ where $1 \le k \le N-2$, such that $\mc{I}^{ \le k}_{1} = (a_1, a_2, \dots, a_k)$ and $\mc{I}^{ \le k}_{2} = (b_1, b_2, \dots, b_k)$, we then construct the index set $\mc{I}^{\le k + 1}$:
$$\mc{I}^{\le k +1}_{1} = (a_1, a_2, \dots, a_k, i), \quad  \mc{I}^{ \le k +1}_{2} = (b_1, b_2, \dots, b_k, j), $$
such that the condition number of the matrix $\mat{X}(\mc{I}^{\le k + 1}, \mc{I}^{ > k +1})$ is minimized for all possible choices of $1 \le i, j \le n_{k+1}$. After this left-to-right sweep is done for the left index sets $\mc{I}^{\le k }$ for $1 \le k \le N-1$, we then fix the left index sets and analogously construct the right index sets $\mc{I}^{ > k}$ for $1\le k \le N-1$ with a right-to-left sweep. We then repeat the left-to-right sweep and right-to-left sweep until the predetermined stopping condition is reached. In our implementation, we set \text{max\_it}$=10$, and we take the set of  index sets $\{\mc{I}^{\le k}, \mc{I}^{>k} \}_{k=1}^{N-1}$ with the smallest maximum condition number with respect to its cross-intersection matrices from the set of index sets containing every newly generated index sets after every left-to-right sweep and right-to-left sweep. We formalize this discussion in Algorithm \ref{alg:greedy_init}.

In terms of run time, Algorithm \ref{alg:greedy_init} requires $\mc{O}(Nn^2)$ kernel evaluations and $\mc{O}(Nn^2)$ flops. Recall, the value of max\_it is fixed to be $10$. The outermost for loop goes from $1$ to $\textrm{max\_it}$, then the outer loop is composed of two blocks and each block contains two-level nested for loops that go from $1$ to $N$ and from $1$ to $n^2$.

\begin{algorithm}[!ht]
\small 
\begin{algorithmic}[1]
   \caption{TT greedy-cross: index initialization}\label{alg:greedy_init}
  \Require{A function $\phi:\mathbb{R}^{n_1 \times \dots \times n_N} \rightarrow \mathbb{R}$ defined by the entries of tensor $\ten{X}$,  $\phi(i_1, \dots, i_N) = x_{i_1,\dots,i_N}$, where $\ten{X} \in \mathbb{R}^{n_1 \times \dots \times n_N}$ is a $N$ dimensional tensor. User defined \text{max\_it} number of iterations.}
  \Ensure{Index sets $\mc{I}^{ > k}, \mc{I}^{\le k}$ for $1 \le k \le N-1$}
  \State Initialize empty index lists $\mc{I}^{\le k}, \mc{I}^{> k}$ for $1 \le k \le N$ 
  \State Initialize a empty set C $\gets \{ \} $
  \State Generate two random vector of indices $\vec{q}_{1}, \vec{q}_{2} \in R^{ N}$ such that $1 \le [\vec{q}_{1}]_{j},[\vec{q}_{2}]_{j} \le n_{j+1}$ for $1 \le j \le N-1$. 
  \For{$k = 1 ,\dots, N-1$}
  \State $\mathcal{I}^{ > k} \gets \{([\vec{q}_1]_{k+1}, [\vec{q}_1]_{k+2},  \dots, [\vec{q}_{1}]_{N}),([\vec{q}_2]_{k+1}, [\vec{q}_2]_{k + 2},   \dots, [\vec{q}_{2}]_{N})  \}$
  \EndFor
  \For{$\textrm{counter} = 1, \dots ,\textrm{max\_it} $ }
  \For{$k = 1 ,\dots, N-1$}
  \State Denote the elements of $\mc{I}^{ > k}$ as  $(i_{k+1}, \dots, i_{N})$, and $(i_{k+1}', \dots, i_{N}')$, and the elements of $\mc{I}^{ \le k-1}$ as $(i_1, \dots, i_{k-1})$, and $(i_1', \dots, i_{k-1}' )$
  \For{$(j_1, j_2) \in \{1, \dots, n_k\} \times \{1, \dots, n_k\}$}
  \If{$k = 1$ and $j_1 = j_2$} Continue
  \EndIf
  \State Compute matrix \[\mat{M}^{(j_1, j_2)} = \begin{bmatrix}
    \phi(i_1,  \dots, i_{k-1}, j_1, i_{k+1}, \dots, i_{N}) &  \phi(i_1,  \dots, i_{k-1}, j_1, i_{k+1}', \dots, i_{N}')  \\
     \phi(i_1',  \dots, i_{k-1}', j_2, i_{k+1}, \dots, i_{N})  &  \phi(i_1',  \dots, i_{k-1}', j_2, i_{k+1}', \dots, i_{N}')  \\
\end{bmatrix}\]
  \EndFor
  \State $(j_1',j_2') = \arg\min_{1 \le j_1, j_2 \le n_k} \textrm{cond}_2(\mat{M}^{(j_1, j_2)})$, $c_k =\textrm{cond}_2(\mat{M}^{(j_1', j_2')}) $ \Comment{2-norm condition number}
  \State $\mc{I}^{\le k} \gets \{(i_1, i_2, \dots, i_{k-1}, j_1'), (i_1', i_2', \dots, i_{k-1}', j_2')\}$
  \EndFor
    \State C $\gets \textrm{C} \cup (\{\mc{I}^{\le k} \}_{k = 1}^{N-1}$,$\{\mc{I}^{> k} \}_{k = 1}^{N-1}, \max_{1 \le k \le N-1} c_{k})$ 
  \For{$k = N , \dots, 2$}
   \State Denote the elements of $\mc{I}^{ > k}$ as  $(i_{k+1}, \dots, i_{N})$, and $(i_{k+1}', \dots, i_{N}')$, and denote the elements of $\mc{I}^{ \le k-1}$ as $(i_1, \dots, i_{k-1})$, and $(i_1', \dots, i_{k-1}' )$
    \For{$(j_1, j_2) \in \{1, \dots, n_k\} \times \{1, \dots, n_k\}$}
      \If{$k = N$ and $j_1 = j_2$} Continue
  \EndIf
  \State Compute matrix $$\mat{M}^{(j_1, j_2)} = \begin{bmatrix}
    \phi(i_1, \dots, i_{k-1}, j_1, i_{k+1}, \dots, i_{N}) &  \phi(i_1,  \dots, i_{k-1}, j_2, i_{k+1}', \dots, i_{N}')  \\
     \phi(i_1',  \dots, i_{k-1}', j_1, i_{k+1}, \dots, i_{N})  &  \phi(i_1', , \dots, i_{k-1}', j_2, i_{k+1}', \dots, i_{N}')  \\
\end{bmatrix}$$
  \EndFor
  \State $(j_1',j_2') = \arg\min_{1 \le j_1, j_2 \le n_k} \textrm{cond}_2(\mat{M}^{(j_1, j_2)})$, $c_k =\textrm{cond}_2(\mat{M}^{(j_1', j_2')}) $
  \State $\mc{I}^{> k-1} \gets \{(j_1', i_{k+1}, \dots, i_{N} ), (j_2', i_{k+1}', \dots, i_N') \}$
  \EndFor
    \State C $\gets \textrm{C} \cup (\{\mc{I}^{\le k} \}_{k = 1}^{N-1}$,$\{\mc{I}^{> k} \}_{k = 1}^{N-1}, \max_{2 \le k \le N} c_{k})$
  \EndFor  
  \State
  \Return Index sets $\mc{I}^{ \le k}, \mc{I}^{> k}$ for $1 \le k \le N-1$ from the set of tuples C  such that the maximum condition number of the cross-intersection matrices is the smallest (third coordinate of the tuple).
  \end{algorithmic}
\end{algorithm}

\section{Comparison with Tucker}\label{sec:tucker}
This section compares our PTTK method with the method described in Section 4.4 of \cite{Saibaba2022} in spatial dimensions of two and three ($d = 2, 3$). We will refer to this method as parametric Tucker. We take $n = 12$ Chebyshev node for both methods; this amounts to the parametric Tucker method having to construct a tensor of size $12^8$, which requires approximately $3.4$GB of memory to store, assuming eight bits of storage for a floating point number. During the experiment, $49$GB of RAM was required.  The results for this particular numerical experiment have been obtained using the Hazel Cluster at North Carolina State University. 

To further make the comparison meaningful between both methods, we take the compression rank given by the PTTK method, denote it as $r_{c}$ and compute a Tucker approximation of $\ten{M}$ with the same rank $\lceil (r_c + 1)^{1/d} \rceil $ in each mode. Subsequently, the compression rank of the parametric Tucker method ends up being greater than or equal to the compression rank of the PTTK method. Such an approximation is computed using Higher Order Orthogonal Iteration (HOOI) using the TensorLy Python library \cite{tensorly}. This allows us to make both methods' storage and compression strength similar so that meaningful comparisons can be made. Other problem settings are defined as follows:
\begin{enumerate}
    \item \textbf{Kernels:} We consider the Exponential kernel with one length scale parameter $\vec{\theta} = (\ell)$ and the Mat\'ern kernel with two parameters for length scale $\ell $ and $\nu$ (i.e, $\vec{\theta} = (\ell, \nu)$).
    \item \textbf{Boxes:} We define $D_b$ to be the distance between the bottom left corners of the boxes $\mc{B}_{s}$ and $\mc{B}_{t}$. We fix $\mc{B}_{s} = [0, 1]^{d}$ and $\mc{B}_{t} = [2, 3]^{d}$. We take $N_s = N_t = 5000$ points from $\mc{B}_{s}$ and $\mc{B}_{t}$, uniformly at random, to synthetically construct the source and target points $\mc{X}$ and $\mc{Y}$.
     \item \textbf{Parameters:} For the Exponential kernel, the parameter space is set to be $\Theta_1 = [\frac{D_b}{2}, D_b]$ while for the Mat\'ern kernel it is $\Theta_2 = [\frac{D_b}{2}, D_b] \times [\frac{1}{2}, 3]$. To compute the error, we randomly sample $300$ points uniformly from the parameter spaces.
\end{enumerate}
\begin{table}[ht]
\label{tab:PTTK_Tuck_dim3}
\small
\centering
\begin{tabular}{c|c|c|c|c}
\hline
\textbf{Kernel}      & \textbf{Tol}         & \textbf{Offline Time (s)}     & \textbf{Kernel Evals}  & \textbf{Error}         \\ \hline
\multicolumn{5}{c}{\textbf{PTTK}} \\ \hline
Exponential & \num{1E-4}  & \num{5.47E-01}  & \num{23609}    & \num{7.89E-05}  \\ \hline
Exponential & \num{1E-08}   & \num{8.56E-01}  & \num{237857}   & \num{7.85E-09}  \\ \hline
Mat\'ern      & \num{1E-4}  & \num{8.3E-01}  & \num{66254}    & \num{6.93E-05} \\ \hline
Mat\'ern      & \num{1E-08}   & \num{3.11E+00}  & \num{1253435}  & \num{7.71E-07} \\ \hline
\multicolumn{5}{c}{\textbf{ Tucker}} \\ \hline
Exponential & \num{1E-4}  & \num{3.25E+01}  & \num{35831808} & \num{7.77E-05} \\ \hline
Exponential & \num{1E-08}   & \num{3.37E+01}  & \num{35831808} & \num{2.16E-06} \\ \hline
Mat\'ern      & \num{1E-4}  & \num{8.38E+02}  & \num{429981696}& \num{3.1E-04} \\ \hline
Mat\'ern      & \num{1E-08}   & \num{8.54E+02}   & \num{429981696}& \num{1.73E-05} \\ \hline
\end{tabular}
\caption{Comparison of exponential and Mat\'ern kernels using PTTK and Tucker methods with spatial dimension equal to three $(d = 3)$.}
\end{table}

\begin{table}[ht]
\label{tab:PTTK_Tuck_dim2}
\small
\centering
\begin{tabular}{c|c|c|c|c}
\hline
\textbf{Kernel}      & \textbf{Tol}         & \textbf{Offline Time (s)}     & \textbf{Kernel Evals}  & \textbf{Error}         \\ \hline
\multicolumn{5}{c}{\textbf{PTTK}} \\ \hline
Exponential & \num{1E-4}  & \num{3.38E-01}  & \num{7590}     & \num{1.06E-04} \\ \hline
Exponential & \num{1E-08}   & \num{3.92E-01}  & \num{30335}    & \num{6.41E-09} \\ \hline
Mat\'ern      & \num{1E-4}  & \num{4.39E-01}  & \num{20011}    & \num{6.37E-05} \\ \hline
Mat\'ern      & \num{1E-08}   & \num{8.12E-01}  & \num{177036}   & \num{7.56E-07} \\ \hline
\multicolumn{5}{c}{\textbf{ Tucker}} \\ \hline
Exponential & \num{1E-4}  & \num{2.79E-01}   & \num{248832}   & \num{1.23E-04} \\ \hline
Exponential & \num{1E-08}   & \num{2.89E-01}  & \num{248832}   & \num{2.51E-07} \\ \hline
Mat\'ern      & \num{1E-4}  & \num{4.32E+00}  & \num{2985984}  & \num{2.28E-05} \\ \hline
Mat\'ern      & \num{1E-08}   & \num{4.36E+00}   & \num{2985984}  & \num{1.66E-06}  \\ \hline
\end{tabular}
\caption{Comparison of exponential and Mat\'ern kernels using PTTK and Tucker methods with spatial dimension equal to two $(d = 2)$.}
\end{table}

Both methods have a similar storage cost and online time, within a factor of three. Observe that the PTTK method consistently outperforms the Tucker method in the number of kernel evaluations; in particular, for $d = 3$, we observe that the Tucker method uses $103 \times $ to $5960 \times $ as many kernel evaluations when compared to the PTTK method. The PTTK method uses fewer kernel evaluations is to be expected, since in contrast to the Tucker approach, the PTTK method uses TT-cross approximation to approximate the coefficient tensor $\ten{M}$, which doesn't require a full evaluation of the tensor. For $d = 2$, we can see that the offline time between the PTTK and Tucker methods is similar. However, for $d = 3$, we can see that the offline phase of the PTTK method is $34 \times$ to $1109 \times$ faster. In particular, for the Mat\'ern kernel, the offline phase of the PTTK method is faster because the kernel evaluations are relatively expensive compared to the exponential kernel. For both spatial dimensions, the relative error of the PTTK method is at least as good as the Tucker method and, in the best-case scenario, more accurate by a factor of at least $10$.

\bibliographystyle{abbrv}
\bibliography{refs}

\begin{thebibliography}{10}

\bibitem{randomfeatures-2}
H.~Avron, V.~Sindhwani, J.~Yang, and M.~W. Mahoney.
\newblock Quasi-{M}onte {C}arlo feature maps for shift-invariant kernels.
\newblock {\em J. Mach. Learn. Res.}, 17:Paper No. 120, 38, 2016.

\bibitem{bebendorf2000approximation}
M.~Bebendorf.
\newblock Approximation of boundary element matrices.
\newblock {\em Numer. Math.}, 86(4):565--589, 2000.

\bibitem{bebendorf2003adaptive}
M.~Bebendorf and S.~Rjasanow.
\newblock Adaptive low-rank approximation of collocation matrices.
\newblock {\em Computing}, 70(1):1--24, 2003.

\bibitem{bhatia1997matrix}
R.~Bhatia.
\newblock {\em Matrix analysis}, volume 169 of {\em Graduate Texts in Mathematics}.
\newblock Springer-Verlag, New York, 1997.

\bibitem{borm2010efficient}
S.~B\"orm.
\newblock {\em Efficient numerical methods for non-local operators}, volume~14 of {\em EMS Tracts in Mathematics}.
\newblock European Mathematical Society (EMS), Z\"urich, 2010.

\bibitem{borm2005hybrid}
S.~B\"orm and L.~Grasedyck.
\newblock Hybrid cross approximation of integral operators.
\newblock {\em Numer. Math.}, 101(2):221--249, 2005.

\bibitem{Cai2023}
D.~Cai, E.~Chow, and Y.~Xi.
\newblock Data-driven linear complexity low-rank approximation of general kernel matrices: a geometric approach.
\newblock {\em Numer. Linear Algebra Appl.}, 30(6):Paper No. e2519, 26, 2023.

\bibitem{Cai}
D.~Cai, J.~Nagy, and Y.~Xi.
\newblock Fast deterministic approximation of symmetric indefinite kernel matrices with high dimensional datasets.
\newblock {\em SIAM J. Matrix Anal. Appl.}, 43(2):1003--1028, 2022.

\bibitem{Civril2009}
A.~\c{C}ivril and M.~Magdon-Ismail.
\newblock On selecting a maximum volume sub-matrix of a matrix and related problems.
\newblock {\em Theoret. Comput. Sci.}, 410(47-49):4801--4811, 2009.

\bibitem{RPCholesky}
Y.~Chen, E.~N. Epperly, J.~A. Tropp, and R.~J. Webber.
\newblock Randomly pivoted cholesky: Practical approximation of a kernel matrix with few entry evaluations.
\newblock {\em arXiv preprint arXiv:2207.06503}, 2022.

\bibitem{cotter2011explicit}
A.~Cotter, J.~Keshet, and N.~Srebro.
\newblock Explicit approximations of the {G}aussian kernel.
\newblock {\em arXiv preprint arXiv:1109.4603}, 2011.

\bibitem{DOLGOV2020106869}
S.~Dolgov and D.~Savostyanov.
\newblock Parallel cross interpolation for high-precision calculation of high-dimensional integrals.
\newblock {\em Comput. Phys. Commun.}, 246:106869, 15, 2020.

\bibitem{edwards2012advanced}
C.~H. Edwards.
\newblock {\em Advanced Calculus of Several Variables}.
\newblock Dover Publications, 1995.

\bibitem{paz}
P.~Fink~Shustin and H.~Avron.
\newblock Gauss-{L}egendre features for {G}aussian process regression.
\newblock {\em J. Mach. Learn. Res.}, 23:Paper No. [92], 47, 2022.

\bibitem{fong2009black}
W.~Fong and E.~Darve.
\newblock The black-box fast multipole method.
\newblock {\em J. Comput. Phys.}, 228(23):8712--8725, 2009.

\bibitem{Goreinov2001}
S.~A. Goreinov and E.~E. Tyrtyshnikov.
\newblock The maximal-volume concept in approximation by low-rank matrices.
\newblock In {\em Structured matrices in mathematics, computer science, and engineering, {I} ({B}oulder, {CO}, 1999)}, volume 280 of {\em Contemp. Math.}, pages 47--51. Amer. Math. Soc., Providence, RI, 2001.

\bibitem{Goreinov1997}
S.~A. Goreinov, E.~E. Tyrtyshnikov, and N.~L. Zamarashkin.
\newblock A theory of pseudoskeleton approximations.
\newblock {\em Linear Algebra Appl.}, 261:1--21, 1997.

\bibitem{greengard1987fast}
L.~Greengard and V.~Rokhlin.
\newblock A fast algorithm for particle simulations.
\newblock {\em Journal of Computational Physics}, 73(2):325--348, 1987.

\bibitem{greengard1991fast}
L.~Greengard and J.~Strain.
\newblock The fast {G}auss transform.
\newblock {\em SIAM J. Sci. Statist. Comput.}, 12(1):79--94, 1991.

\bibitem{Hackbusch2015}
W.~Hackbusch.
\newblock {\em Hierarchical matrices: algorithms and analysis}, volume~49 of {\em Springer Series in Computational Mathematics}.
\newblock Springer, Heidelberg, 2015.

\bibitem{halko}
N.~Halko, P.~G. Martinsson, and J.~A. Tropp.
\newblock Finding structure with randomness: probabilistic algorithms for constructing approximate matrix decompositions.
\newblock {\em SIAM Rev.}, 53(2):217--288, 2011.

\bibitem{hall2023efficient}
K.~A. Hall-Hooper, A.~K. Saibaba, J.~Chung, and S.~M. Miller.
\newblock Efficient iterative methods for hyperparameter estimation in large-scale linear inverse problems.
\newblock {\em arXiv preprint arXiv:2311.15827}, 2023.

\bibitem{könig2023efficient}
J.~K{\"o}nig, M.~Pfeffer, and M.~Stoll.
\newblock Efficient training of {G}aussian processes with tensor product structure.
\newblock {\em arXiv preprint arXiv:2312.15305}, 2023.

\bibitem{tensorly}
J.~Kossaifi, Y.~Panagakis, A.~Anandkumar, and M.~Pantic.
\newblock Tensorly: Tensor learning in python.
\newblock {\em Journal of Machine Learning Research}, 20(26):1--6, 2019.

\bibitem{https://doi.org/10.1002/nla.2576}
D.~Kressner and H.~Y. Lam.
\newblock Randomized low-rank approximation of parameter-dependent matrices.
\newblock {\em Numerical Linear Algebra with Applications}, page e2576, 2024.

\bibitem{DK2020}
D.~Kressner, J.~Latz, S.~Massei, and E.~Ullmann.
\newblock Certified and fast computations with shallow covariance kernels.
\newblock {\em Found. Data Sci.}, 2(4):487--512, 2020.

\bibitem{liu2020parallel}
Y.~Liu, W.~Sid-Lakhdar, E.~Rebrova, P.~Ghysels, and X.~S. Li.
\newblock A parallel hierarchical blocked adaptive cross approximation algorithm.
\newblock {\em The International Journal of High Performance Computing Applications}, 34(4):394--408, 2020.

\bibitem{mason2003chebyshev}
J.~C. Mason and D.~C. Handscomb.
\newblock {\em Chebyshev polynomials}.
\newblock Chapman \& Hall/CRC, Boca Raton, FL, 2003.

\bibitem{oseledets2010ttcross}
I.~Oseledets and E.~Tyrtyshnikov.
\newblock T{T}-cross approximation for multidimensional arrays.
\newblock {\em Linear Algebra Appl.}, 432(1):70--88, 2010.

\bibitem{oseledets2011tensor}
I.~V. Oseledets.
\newblock Tensor-train decomposition.
\newblock {\em SIAM J. Sci. Comput.}, 33(5):2295--2317, 2011.

\bibitem{osinsky2019tensor}
A.~I. Osinsky.
\newblock Tensor trains approximation estimates in the {C}hebyshev norm.
\newblock {\em Comput. Math. Math. Phys.}, 59(2):201--206, 2019.

\bibitem{park2024lowrankapproximationparameterdependentmatrices}
T.~Park and Y.~Nakatsukasa.
\newblock Low-rank approximation of parameter-dependent matrices via cur decomposition.
\newblock {\em arXiv preprint arXiv:2408.05595}, 2024.

\bibitem{scikit-learn}
F.~Pedregosa, G.~Varoquaux, A.~Gramfort, V.~Michel, B.~Thirion, O.~Grisel, M.~Blondel, P.~Prettenhofer, R.~Weiss, V.~Dubourg, J.~Vanderplas, A.~Passos, D.~Cournapeau, M.~Brucher, M.~Perrot, and E.~Duchesnay.
\newblock {Scikit-learn: Machine Learning in Python}.
\newblock {\em Journal of Machine Learning Research}, 12:2825--2830, 2011.

\bibitem{qin2022error}
Z.~Qin, A.~Lidiak, Z.~Gong, G.~Tang, M.~B. Wakin, and Z.~Zhu.
\newblock Error analysis of tensor-train cross approximation.
\newblock {\em Advances in Neural Information Processing Systems}, 35:14236--14249, 2022.

\bibitem{randomfeatures-1}
A.~Rahimi and B.~Recht.
\newblock Random features for large-scale kernel machines.
\newblock {\em Advances in neural information processing systems}, 20, 2007.

\bibitem{Saibaba2022}
A.~K. Saibaba, R.~Minster, and M.~E. Kilmer.
\newblock Efficient randomized tensor-based algorithms for function approximation and low-rank kernel interactions.
\newblock {\em Adv. Comput. Math.}, 48(5):Paper No. 66, 39, 2022.

\bibitem{savo}
D.~V. Savostyanov.
\newblock Quasioptimality of maximum-volume cross interpolation of tensors.
\newblock {\em Linear Algebra Appl.}, 458:217--244, 2014.

\bibitem{shi}
T.~Shi, M.~Ruth, and A.~Townsend.
\newblock Parallel algorithms for computing the tensor-train decomposition.
\newblock {\em SIAM J. Sci. Comput.}, 45(3):C101--C130, 2023.

\bibitem{Shi-Townsend}
T.~Shi and A.~Townsend.
\newblock On the compressibility of tensors.
\newblock {\em SIAM J. Matrix Anal. Appl.}, 42(1):275--298, 2021.

\bibitem{strossner2022approximation}
C.~Str\"ossner, B.~Sun, and D.~Kressner.
\newblock Approximation in the extended functional tensor train format.
\newblock {\em Adv. Comput. Math.}, 50(3):Paper No. 54, 28, 2024.

\bibitem{trefethen2020approximation}
L.~N. Trefethen.
\newblock {\em Approximation theory and approximation practice}.
\newblock Society for Industrial and Applied Mathematics (SIAM), Philadelphia, PA, extended edition, [2020] \copyright 2020.

\bibitem{DBLP:journals/corr/WangLD17}
R.~Wang, Y.~Li, and E.~Darve.
\newblock On the numerical rank of radial basis function kernels in high dimensions.
\newblock {\em SIAM J. Matrix Anal. Appl.}, 39(4):1810--1835, 2018.

\bibitem{NIPS2000_19de10ad}
C.~Williams and M.~Seeger.
\newblock Using the nystr{\"o}m method to speed up kernel machines.
\newblock {\em Advances in neural information processing systems}, 13, 2000.

\bibitem{ye2020analytical}
X.~Ye, J.~Xia, and L.~Ying.
\newblock Analytical low-rank compression via proxy point selection.
\newblock {\em SIAM J. Matrix Anal. Appl.}, 41(3):1059--1085, 2020.

\bibitem{ying2004kernel}
L.~Ying, G.~Biros, and D.~Zorin.
\newblock A kernel-independent adaptive fast multipole algorithm in two and three dimensions.
\newblock {\em J. Comput. Phys.}, 196(2):591--626, 2004.

\end{thebibliography}

\end{document}